
\newfont{\bcb}{msbm10}
\newfont{\matb}{cmbx10}
\newfont{\got}{eufm10}

\documentclass[12pt]{amsart}
\usepackage{amsmath, amsthm, amscd, amsfonts, amssymb, latexsym, graphicx, color}
\usepackage[bookmarksnumbered, colorlinks, plainpages, hypertex]{hyperref}

\usepackage[cp1250]{inputenc}

\usepackage{amsmath,amsthm}
\usepackage{amssymb,latexsym}
\usepackage{enumerate}

\newtheorem{theorem}{Theorem}[section]
\newtheorem{lemma}[theorem]{Lemma}
\newtheorem{proposition}[theorem]{Proposition}
\newtheorem{corollary}[theorem]{Corollary}
\theoremstyle{definition}

\theoremstyle{remark}
\newtheorem{remark}[theorem]{Remark}
\numberwithin{equation}{section}

\begin{document}

\title[An analytic closedness theorem]{A closedness theorem
      \\ over Henselian valued fields
      \\ with analytic structure}

\author[Krzysztof Jan Nowak]{Krzysztof Jan Nowak}


\subjclass[2000]{Primary 32B05, 32B20, 14G27; Secondary 03C10,
14P15, 12J25.}

\keywords{Henselian valued fields, analytic structure, separated
power series, closedness theorem, fiber shrinking, b-minimal cell
decomposition}

\date{}

\begin{abstract}
The main purpose of the paper is to establish a closedness theorem
over Henselian valued fields $K$ of equicharacteristic zero (not
necessarily algebraically closed) with separated analytic
structure. It says that every projection with a projective fiber
is a definably closed map. This remains valid also for valued
fields with analytic structure induced by a strictly convergent
Weierstrass systems, including the classical, complete rank one
valued fields with the Tate algebra of strictly convergent power
series. As application, we prove two theorems on existence of the
limit and on piecewise continuity.
\end{abstract}

\maketitle

\section{Introduction}

Throughout the paper, we shall deal with Henselian valued fields
$K$ with separated analytic structure, nor necessarily
algebraically closed. We shall always assume that the ground field
$K$ is of equicharacteristic zero. A separated analytic structure
is determined by a certain separated  Weierstrass system
$\mathcal{A}$ defined on an arbitrary commutative ring $A$ with
unit (cf.~\cite{C-Lip-0,C-Lip}), and the involved analytic
language $\mathcal{L}$ is the two sorted, semialgebraic language
$\mathcal{L}_{Hen}$ augmented by the reciprocal function $1/x$ and
the names of all functions of the system $\mathcal{A}$, construed
via the analytic $\mathcal{A}$-structure on their natural domains
and as zero outside them. For convenience, we remind the reader of
these concepts in Section~2. The theory of valued fields with
analytic structure was developed in the
papers~\cite{Dries,Dries-Has,L-R,C-Lip-R,C-Lip-0,C-Lip}.

\vspace{1ex}

Given a valued field $K$, denote by $v$, $\Gamma = \Gamma_{K}$,
$K^{\circ}$, $K^{\circ \circ}$ and $\widetilde{K}$ the valuation,
its value group, the valuation ring, maximal ideal and residue
field, respectively. By the $K$-topology on $K^{n}$ we mean the
topology induced by the valuation $v$.

\vspace{1ex}

The main result of this article is the following closedness
theorem.

\begin{theorem}\label{clo-th}
Given an $\mathcal{L}$-definable subset $D$ of $K^{n}$, the
canonical projection
$$ \pi: D \times (K^{\circ})^{m} \longrightarrow D  $$
is definably closed in the $K$-topology, i.e.\ if $B \subset D
\times (K^{\circ})^{m}$ is an $\mathcal{L}$-definable closed
subset, so is its image $\pi(B) \subset D$.
\end{theorem}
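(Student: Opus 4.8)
The plan is to deduce the general case from the case $m=1$ by induction, and then to recast definable closedness of $\pi$ as the existence of a limit point lying in $B$. First I would factor the projection as the composition
\[ D \times (K^{\circ})^{m} \longrightarrow D \times (K^{\circ})^{m-1} \longrightarrow \cdots \longrightarrow D, \]
where each arrow forgets a single $K^{\circ}$-coordinate. Since a composition of definably closed maps is definably closed and each intermediate stage has exactly the form treated in the theorem (with $D \times (K^{\circ})^{k}$ in the role of the base), it suffices to establish the case $m=1$. So let $B \subset D \times K^{\circ}$ be $\mathcal{L}$-definable and closed in the $K$-topology, fix a point $a \in D$ in the closure of $\pi(B)$, and aim to produce $b \in K^{\circ}$ with $(a,b) \in B$; doing this for every such $a$ gives the theorem.

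Next I would use fiber shrinking to replace $B$ near $a$ by a one-parameter definable family accumulating at the fibre over $a$. Since $a$ is an accumulation point of $\pi(B)$, fiber shrinking supplies a definable subset of $\pi(B)$ accumulating at $a$ together with a definable section back into $B$, so that the problem reduces to computing the limit of the $K^{\circ}$-component $\xi(x)$ of this section as its base point $x$ tends to $a$ along the shrinking family. To control $\xi$ I would apply b-minimal cell decomposition to $B$: over each cell the last coordinate is either the graph of a definable function or ranges over a definable band, and after refining we may assume the section lands in a single cell whose base has $a$ in its closure, so that $\xi$ is a genuine definable function near $a$.

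The crux, and the step I expect to be hardest, is showing that $\xi(x)$ is $K$-Cauchy with limit $b$ in $K^{\circ}$ as $x \to a$. Here the analytic structure is essential: applying Weierstrass preparation and division for separated power series to the term defining $\xi$, one analyses the valuation $v(\xi(x))$ and the residue of $\xi(x)$ and shows they stabilise, so that $\xi(x)$ converges in the $K$-topology; the limit remains in $K^{\circ}$ because the valuation ring is closed and the band containing $\xi$ lies inside it, which is precisely the role played here by the "bounded" factor $K^{\circ}$. Since $(a,b)$ is then a limit of points of $B$ and $B$ is closed, we conclude $(a,b) \in B$ and hence $a \in \pi(B)$. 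The delicate points throughout are that $\Gamma$ need not be discrete, so all limits must be taken in the genuine valuation topology rather than along sequences, and that $K$ is not assumed algebraically closed, so one cannot pass to an algebraically closed valued field but must work directly with the b-minimal cells and the error terms produced by the separated power series, verifying that these error terms do not destroy the convergence of $\xi$.
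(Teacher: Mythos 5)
Your overall architecture matches the paper's: reduce to $m=1$, use fiber shrinking and b-minimal cell decomposition with centers, and boil everything down to a limit statement for a definable function of one variable. The gap is in the crux step, which is where the real content of the theorem lies. An $\mathcal{L}$-definable function of one variable is not given by a single separated power series, so ``Weierstrass preparation and division applied to the term defining $\xi$'' is not available as stated. The paper first invokes the term structure theorem (Theorem~\ref{term}, i.e.\ \cite[Theorem~6.3.8]{C-L}), which writes $f(x)=t(x,g(x))$ with $t$ a term in the language augmented by the Henselian functions $h_{m}$ and $g$ an auxiliary-sort-valued definable function, and then inducts on the complexity of $t$. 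The genuinely hard case is $t=h_{m}(a_{0}(x),\ldots,a_{m}(x),g_{0}(x))$: there one pins $v(x)$ and $g_{0}(x)$ to an affine line in $\Gamma$ via Lemma~\ref{line-2}, performs a homothetic change of variable $Z=t^{Npr}T/d$, and applies Hensel's lemma to the rescaled polynomial. None of this is captured by a bare appeal to Weierstrass division, and it is precisely here that the non-algebraically-closed, non-discrete setting bites. A second, related overclaim: the limit of $\xi(x)$ as $x\to a$ need not exist along the whole family; the correct statement (Proposition~\ref{limit-th1}) only produces a definable subset $F$ still accumulating at $0$ along which the limit exists, and that weaker conclusion is what the closedness argument actually uses. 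Your assertion that $\xi$ is $K$-Cauchy is stronger than what is true or needed.

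Two smaller but genuine issues. Fiber shrinking does not supply ``a definable section back into $B$''; it is applied to the base set $A=\pi(B)\subset K^{n}$ to reduce the number of base variables to one, a reduction your proposal skips (you never address $D\subset K^{n}$ with $n>1$). And in the ball case of the cell decomposition you must actually exhibit a definable curve inside $B$ approaching the fibre over $a$: the paper constrains the center $\zeta$ and the radius function $\rho$ to affine lines in $\Gamma$ and then writes down the explicit graph $\{(ct^{qN},\,\zeta(ct^{qN})+ct^{-pN}): t\in K^{\circ}\}$ contained in $B$. Saying ``the section lands in a single cell, so $\xi$ is a genuine definable function'' does not by itself produce a definably and convergently varying point of each ball.
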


It immediately yields the five corollaries stated below. The last
three of them enable, in the non-Archimedean analytic case,
application of resolution of singularities and transformation to a
normal crossing by blowing up in much the same way as over locally
compact ground fields (see \cite{Now-Sel,Now-thm} for application
in the non-Archimedean algebraic case).

\begin{corollary}\label{clo-th-cor-0}
Let $D$ be an $\mathcal{L}$-definable subset of $K^{n}$ and
$\,\mathbb{P}^{m}(K)$ stand for the projective space of dimension
$m$ over $K$. Then the canonical projection
$$ \pi: D \times \mathbb{P}^{m}(K) \longrightarrow D $$
is definably closed. \hspace*{\fill} $\Box$
\end{corollary}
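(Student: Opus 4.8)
The plan is to reduce the projective case directly to Theorem~\ref{clo-th} by covering $\mathbb{P}^{m}(K)$ with finitely many $\mathcal{L}$-definable charts, each definably homeomorphic to the polydisc $(K^{\circ})^{m}$. For each index $i \in \{0,1,\dots,m\}$ I would set
$$ V_{i} := \{ [x_{0}: \cdots : x_{m}] \in \mathbb{P}^{m}(K) : v(x_{i}) \leq v(x_{j}) \ \text{ for all } j \}, $$
the locus where the $i$-th homogeneous coordinate attains the minimal valuation (equivalently, the maximal absolute value). This condition is invariant under rescaling the homogeneous coordinates, so it descends to $\mathbb{P}^{m}(K)$; moreover each inequality $v(x_{i}) \leq v(x_{j})$ is a closed condition in the $K$-topology, so every $V_{i}$ is a closed $\mathcal{L}$-definable subset, and the $V_{i}$ cover $\mathbb{P}^{m}(K)$ since at every point some coordinate realizes the minimum.

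Next I would record that on $V_{i}$ the normalization $x_{i} = 1$ is available: the map
$$ \varphi_{i}: (K^{\circ})^{m} \longrightarrow V_{i}, \qquad (y_{1}, \dots, y_{m}) \longmapsto [y_{1}: \cdots : y_{i} : 1 : y_{i+1} : \cdots : y_{m}], $$
with $1$ inserted in the $i$-th slot, is an $\mathcal{L}$-definable homeomorphism onto $V_{i}$. Indeed, $\varphi_{i}$ is the restriction of the standard affine chart isomorphism $U_{i} \cong K^{m}$, where $U_{i} = \{ x_{i} \neq 0\}$ carries the coordinates $x_{j}/x_{i}$; under this chart the subset $V_{i}$ corresponds exactly to those tuples whose coordinates $x_{j}/x_{i}$ all lie in $K^{\circ}$, and continuity of $\varphi_{i}$ together with that of its inverse follows from continuity of the chart maps in the $K$-topology.

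With this in place the conclusion follows by a chart-by-chart application of the theorem. Given a closed $\mathcal{L}$-definable $B \subset D \times \mathbb{P}^{m}(K)$, I would write
$$ \pi(B) = \bigcup_{i=0}^{m} \pi\bigl(B \cap (D \times V_{i})\bigr), $$
so that it suffices to show each term is closed, a finite union of closed sets being closed. Fixing $i$, since $V_{i}$ is closed the set $B \cap (D \times V_{i})$ is closed in $D \times V_{i}$, and transporting it through the homeomorphism $\mathrm{id}_{D} \times \varphi_{i}$ yields a closed $\mathcal{L}$-definable subset $B_{i} \subset D \times (K^{\circ})^{m}$ whose image under the canonical projection equals $\pi\bigl(B \cap (D \times V_{i})\bigr)$. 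Theorem~\ref{clo-th} then gives that this image is closed in $D$.

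Since the argument is essentially a bookkeeping reduction, I do not expect a genuine obstacle; the one point deserving care is verifying that $\varphi_{i}$ is a homeomorphism onto the closed chart $V_{i}$ and, in particular, that closedness of $B \cap (D \times V_{i})$ in the relative topology is faithfully transported back to $D \times (K^{\circ})^{m}$ — this compatibility is precisely what licenses applying Theorem~\ref{clo-th} separately on each chart and recombining the results.
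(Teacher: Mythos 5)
Your argument is correct and is precisely the standard reduction the paper has in mind when it declares this corollary an immediate consequence of Theorem~\ref{clo-th}: cover $\mathbb{P}^{m}(K)$ by the finitely many closed definable charts $V_{i}$ where the $i$-th homogeneous coordinate has minimal valuation, identify each with $(K^{\circ})^{m}$ via a definable homeomorphism, and take the finite union of closed images. The paper offers no written proof (only a $\Box$), and your write-up supplies exactly the bookkeeping it leaves implicit.
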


\begin{corollary}\label{clo-th-cor-1}
Let $A$ be a closed $\mathcal{L}$-definable subset of
$\,\mathbb{P}^{m}(K)$ or $R^{m}$. Then every continuous
$\mathcal{L}$-definable map $f: A \to K^{n}$ is definably closed
in the $K$-topology.
\end{corollary}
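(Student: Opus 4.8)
The plan is to deduce this from Corollary~\ref{clo-th-cor-0} (in the projective case) and from Theorem~\ref{clo-th} itself (in the case $A \subset R^{m}$), using the graph of $f$ as the bridge. Unwinding the definition of definable closedness, it suffices to fix an arbitrary closed $\mathcal{L}$-definable subset $C \subset A$ and to show that its image $f(C)$ is closed in $K^{n}$.

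First I would pass to the graph
$$ \Gamma := \{ (x, f(x)) : x \in C \} \subset A \times K^{n}. $$
Since $A$ is closed in the ambient space ($\mathbb{P}^{m}(K)$ or $R^{m} = (K^{\circ})^{m}$) and $C$ is closed in $A$, the set $C$ is closed in the ambient space, whence $C \times K^{n}$ is closed in (ambient)$\,\times K^{n}$. Within $C \times K^{n}$ the graph $\Gamma$ is the preimage of the diagonal $\Delta \subset K^{n} \times K^{n}$ under the continuous map $(x, y) \mapsto (f(x), y)$; as the $K$-topology on $K^{n}$ is Hausdorff, $\Delta$ is closed, so $\Gamma$ is closed in $C \times K^{n}$, hence closed in (ambient)$\,\times K^{n}$. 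Note that $C$, the map $(x,y) \mapsto (f(x),y)$, and therefore $\Gamma$ are all $\mathcal{L}$-definable, because $f$ is.

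Next I would observe that $f(C)$ is exactly the image of $\Gamma$ under the canonical projection $\pi$ onto the $K^{n}$-factor. In the projective case this projection $\mathbb{P}^{m}(K) \times K^{n} \to K^{n}$ is definably closed by Corollary~\ref{clo-th-cor-0} (applied with base $D = K^{n}$, after the definable coordinate-interchange homeomorphism, which preserves definable closedness); in the case $A \subset R^{m}$ the projection $(K^{\circ})^{m} \times K^{n} \to K^{n}$ is definably closed by Theorem~\ref{clo-th} (again with $D = K^{n}$). Applying the relevant projection to the closed $\mathcal{L}$-definable set $\Gamma$ yields that $f(C) = \pi(\Gamma)$ is closed in $K^{n}$, as required.

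The only genuinely non-formal point is the closedness of the graph $\Gamma$; everything else is a direct invocation of the closedness theorem, so I do not expect a serious obstacle here. The one subtlety to keep in view is the insistence on definability at each step, so that the definable (rather than merely set-theoretic topological) closedness furnished by Theorem~\ref{clo-th} and Corollary~\ref{clo-th-cor-0} genuinely applies; once that is checked, the two cases $\mathbb{P}^{m}(K)$ and $R^{m}$ run in complete parallel.
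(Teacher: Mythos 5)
Your argument is correct and is precisely the routine graph argument the paper leaves implicit (the corollary is stated without proof as an immediate consequence of Theorem~\ref{clo-th} and Corollary~\ref{clo-th-cor-0}): reduce to showing $f(C)$ is closed for closed definable $C\subset A$, note that the graph of $f|_C$ is a closed definable subset of the product since $C$ is closed, $f$ is continuous and the $K$-topology is Hausdorff, and then project onto the $K^{n}$-factor using the definable closedness of $K^{n}\times\mathbb{P}^{m}(K)\to K^{n}$, resp.\ $K^{n}\times (K^{\circ})^{m}\to K^{n}$. All definability and closedness checks are handled properly, so nothing is missing.
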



\begin{corollary}\label{clo-th-cor-2}
Let $\phi_{i}$, $i=0,\ldots,m$, be regular functions on $K^{n}$,
$D$ be an $\mathcal{L}$-definable subset of $K^{n}$ and $\sigma: Y
\longrightarrow K\mathbb{A}^{n}$ the blow-up of the affine space
$K\mathbb{A}^{n}$ with respect to the ideal
$(\phi_{0},\ldots,\phi_{m})$. Then the restriction
$$ \sigma: Y(K) \cap \sigma^{-1}(D) \longrightarrow D $$
is a definably closed quotient map.
\end{corollary}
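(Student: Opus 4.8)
The plan is to exhibit the set of $K$-rational points of the blow-up as a closed $\mathcal{L}$-definable subset of $D \times \mathbb{P}^{m}(K)$ and to read off the conclusion from Corollary~\ref{clo-th-cor-0}. By construction $Y$ is the Zariski closure in $K\mathbb{A}^{n} \times \mathbb{P}^{m}$ of the graph of the rational map
$$ K\mathbb{A}^{n} \dashrightarrow \mathbb{P}^{m}, \qquad x \longmapsto (\phi_{0}(x) : \cdots : \phi_{m}(x)), $$
and $\sigma$ is the restriction to $Y$ of the first projection. Thus $Y$ is cut out in $K\mathbb{A}^{n} \times \mathbb{P}^{m}$ by polynomial bihomogeneous equations (among them the incidence relations $\phi_{i} X_{j} = \phi_{j} X_{i}$), so that $Y(K)$ is an algebraic, hence $K$-closed and $\mathcal{L}$-definable, subset of $K^{n} \times \mathbb{P}^{m}(K)$. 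Since $\sigma^{-1}(D) = Y(K) \cap (D \times \mathbb{P}^{m}(K))$, the set $B := Y(K) \cap \sigma^{-1}(D)$ is $\mathcal{L}$-definable and closed in $D \times \mathbb{P}^{m}(K)$.

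I would first settle definable closedness. The map under consideration is nothing but the restriction to $B$ of the canonical projection $\pi : D \times \mathbb{P}^{m}(K) \to D$. If $C \subseteq B$ is $\mathcal{L}$-definable and closed in $B$, then, $B$ being closed in $D \times \mathbb{P}^{m}(K)$, the set $C$ is closed in $D \times \mathbb{P}^{m}(K)$ as well; Corollary~\ref{clo-th-cor-0} yields that $\sigma(C) = \pi(C)$ is closed in $D$. Hence $\sigma : B \to D$ is definably closed, and it is continuous, being a coordinate projection.

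It remains to prove that $\sigma : B \to D$ is onto, since a continuous closed surjection is automatically a quotient map. Over the complement of the center $V(\phi_{0}, \ldots, \phi_{m})$ the morphism $\sigma$ is an isomorphism, so each $a \in D$ with $(\phi_{0}(a), \ldots, \phi_{m}(a)) \neq 0$ is the image of the unique point $(a, (\phi_{0}(a) : \cdots : \phi_{m}(a))) \in B$. For $a \in D$ lying in the center I would produce a $K$-point of the fiber over $a$ by a polynomial-arc argument. Pick $b \in K^{n}$ with $(\phi_{0}(b), \ldots, \phi_{m}(b)) \neq 0$, set $v := b - a$, and write $\phi_{i}(a + tv) = t^{d_{i}} u_{i}(t)$ with $u_{i}(0) \neq 0$ when $\phi_{i}(a + tv) \not\equiv 0$, and $d_{i} := +\infty$ otherwise; put $d := \min_{i} d_{i}$, which is finite because $\phi_{i}(b) \neq 0$ for some $i$. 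For every sufficiently small $t \neq 0$ the point $a + tv$ avoids the center, so the graph contains $(a + tv, (\phi_{0}(a + tv) : \cdots : \phi_{m}(a + tv)))$; renormalizing by $t^{-d}$, the representative $(t^{-d}\phi_{0}(a + tv), \ldots, t^{-d}\phi_{m}(a + tv))$ converges in $K^{m+1}$, as $t \to 0$, to the nonzero vector $(c_{0}, \ldots, c_{m})$ with $c_{i} = u_{i}(0)$ if $d_{i} = d$ and $c_{i} = 0$ otherwise. Hence the corresponding projective points converge in $\mathbb{P}^{m}(K)$ to $(c_{0} : \cdots : c_{m})$, and since $Y(K)$ is $K$-closed the limit $(a, (c_{0} : \cdots : c_{m}))$ lies in $Y(K)$; as it projects to $a \in D$, it belongs to $B$. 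Therefore $a \in \sigma(B)$, and $\sigma$ is surjective.

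I expect the only genuinely non-formal point to be this surjectivity on $K$-rational points over the center: it is precisely here that the passage from the scheme-theoretic blow-up to the continuous map on $K$-points must be controlled, whereas definable closedness is immediate from Corollary~\ref{clo-th-cor-0} once $Y(K)$ is recognized as a closed definable set. The remaining care concerns the order estimates guaranteeing that the chosen arc meets the center only at $t = 0$, and the continuity of the projection $K^{m+1} \setminus \{0\} \to \mathbb{P}^{m}(K)$ used in the limit passage.
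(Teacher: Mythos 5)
Your proof is correct and its core is exactly the paper's (one-sentence) argument: regard $Y(K)$ as a Zariski-closed, hence $K$-closed and $\mathcal{L}$-definable, subvariety of $K^{n}\times\mathbb{P}^{m}(K)$, identify $\sigma$ with the canonical projection, and invoke Corollary~\ref{clo-th-cor-0}. Where you go beyond the paper is in verifying surjectivity of $\sigma$ on $K$-rational points over the center, which is genuinely needed for the word ``quotient'' and is not automatic over a non-algebraically-closed field; the paper leaves this entirely implicit. Your arc argument handles it correctly: since $K$ is infinite, a point $b$ off the center exists whenever the ideal is nonzero, the line $a+tv$ meets the center only at finitely many $t$, and the renormalized representatives $t^{-d}\phi_{i}(a+tv)$ converge to a nonzero vector, so $K$-closedness of $Y(K)$ (for the non-trivially valued case, which is the only one of interest here) places a $K$-point of the fiber over $a$. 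This extra step is a worthwhile addition rather than a deviation.
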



\begin{proof} Indeed, $Y(K)$ can be regarded as a closed algebraic subvariety of
$K^{n} \times \mathbb{P}^{m}(K)$ and $\sigma$ as the canonical
projection.
\end{proof}

\begin{corollary}\label{clo-th-cor-3}
Let $X$ be a smooth $K$-variety, $\phi_{i}$, $i=0,\ldots,m$,
regular functions on $X$, $D$ be an $\mathcal{L}$-definable subset
of $X(K)$ and $\sigma: Y \longrightarrow X$ the blow-up of the
ideal $(\phi_{0},\ldots,\phi_{m})$. Then the restriction
$$ \sigma: Y(K) \cap \sigma^{-1}(D) \longrightarrow D $$
is a definably closed quotient map.  \hspace*{\fill} $\Box$
\end{corollary}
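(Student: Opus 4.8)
The plan is to reduce everything to the affine situation already treated in Corollary~\ref{clo-th-cor-2} and to the projective projection of Corollary~\ref{clo-th-cor-0}, by working in affine charts on $X$. The starting observation is that all the relevant properties are local on the target: a definable map is definably closed, surjective, or a quotient map iff its restriction over each member of an open cover of the target has that property. For definable closedness this rests on the set-theoretic identity $\sigma(C) \cap U(K) = \sigma\big(C \cap \sigma^{-1}(U(K))\big)$ together with the fact that closedness of a subset of $D$ may be tested after intersecting with the pieces of a $K$-open cover of $D$; surjectivity and the quotient-map property are local on the target for the same reason. Hence it suffices to verify the assertion over each piece of a suitable open cover of $D$.

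Next I would cover the variety $X$ by finitely many affine open subsets $U_\alpha$. Each $U_\alpha(K)$ is open in $X(K)$ for the $K$-topology, since its complement is Zariski closed and therefore $K$-closed, so the sets $D \cap U_\alpha(K)$ form a $K$-open cover of $D$. Blowing up commutes with restriction to open subsets, so $\sigma^{-1}(U_\alpha)$ is precisely the blow-up of $U_\alpha$ along the restricted ideal $(\phi_0,\ldots,\phi_m)|_{U_\alpha}$. Realizing $U_\alpha$ as a closed subvariety of some $K\mathbb{A}^{N_\alpha}$, this blow-up embeds into $U_\alpha \times \mathbb{P}^m(K)$ as the Zariski closure of the graph of the rational map $x \mapsto [\phi_0(x):\cdots:\phi_m(x)]$, with $\sigma$ restricting to the canonical projection onto the first factor, exactly as in the proof of Corollary~\ref{clo-th-cor-2}. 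In particular $\sigma^{-1}(U_\alpha)(K)$ is a Zariski closed, hence $K$-closed, subset of $U_\alpha(K) \times \mathbb{P}^m(K) \subset K^{N_\alpha} \times \mathbb{P}^m(K)$.

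Then, over a single chart, a definable set $C$ closed in $\sigma^{-1}(D \cap U_\alpha(K))$ is also closed in $(D \cap U_\alpha(K)) \times \mathbb{P}^m(K)$, because $\sigma^{-1}(U_\alpha)(K)$ meets that product in a closed set. Applying Corollary~\ref{clo-th-cor-0} with $D$ replaced by the definable subset $D \cap U_\alpha(K)$ of $K^{N_\alpha}$ shows that $\sigma(C)$ is closed in $D \cap U_\alpha(K)$; by the locality step, $\sigma$ is definably closed on all of $D$. Since $\sigma$ is continuous, and surjectivity together with the quotient-map property are again local on the target and already hold in the affine case, these are inherited chart by chart, so $\sigma$ is a definable quotient map onto $D$. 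The only genuinely delicate point is the chart-wise realization of the blow-up as a closed subvariety of $K^{N_\alpha} \times \mathbb{P}^m(K)$ compatible with $\sigma$, so that Corollary~\ref{clo-th-cor-0} applies verbatim; once that identification and the target-locality of the properties in question are in place, the remainder is a routine gluing argument.
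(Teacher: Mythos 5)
Your argument is correct and is exactly the globalization the paper leaves implicit: the paper proves Corollary~\ref{clo-th-cor-2} by identifying the blow-up with a closed subvariety of $K^{n}\times\mathbb{P}^{m}(K)$ and invoking Corollary~\ref{clo-th-cor-0}, and states Corollary~\ref{clo-th-cor-3} as immediate, the intended step being precisely your reduction to affine charts using that definable closedness, surjectivity and the quotient property are local on the target and that blowing up commutes with restriction to open subsets. Your write-up supplies the details the paper omits, so it matches the paper's approach.
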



\begin{corollary}\label{clo-th-cor-4} (Descent property)
Under the assumptions of the above corollary, every continuous
$\mathcal{L}$-definable function
$$ g: Y(K) \cap \sigma^{-1}(D) \longrightarrow K $$
that is constant on the fibers of the blow-up $\sigma$ descends to
a (unique) continuous $\mathcal{L}$-definable function $f: D
\longrightarrow K$. \hspace*{\fill} $\Box$
\end{corollary}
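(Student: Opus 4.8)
The plan is to produce $f$ first as a set-theoretic map, then to check that it is $\mathcal{L}$-definable, and finally to deduce its continuity from the fact, established in the preceding corollary, that $\sigma$ restricts to a definably closed quotient map onto $D$.

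First I would put $E := Y(K) \cap \sigma^{-1}(D)$ and recall from Corollary~\ref{clo-th-cor-3} that $\sigma\colon E \to D$ is surjective. Since $g$ is constant on the fibres $\sigma^{-1}(x)\cap E$, $x \in D$, setting $f(x) := g(y)$ for an arbitrary $y \in \sigma^{-1}(x)\cap E$ is well defined and gives the unique map satisfying $g = f\circ\sigma$, uniqueness being immediate from the surjectivity of $\sigma$. The graph of $f$ is
$$ \{ (x,t) \in D\times K : \exists\, y\ ( y \in E \ \wedge\ \sigma(y)=x \ \wedge\ g(y)=t ) \}; $$
since $E$, the algebraic map $\sigma$ and the $\mathcal{L}$-definable function $g$ are all $\mathcal{L}$-definable, this set is $\mathcal{L}$-definable, and hence so is $f$.

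For continuity the key observation is that $\sigma^{-1}(f^{-1}(C)) = g^{-1}(C)$ for every $C \subseteq K$, so that, by surjectivity of $\sigma$,
$$ f^{-1}(C) = \sigma\bigl( g^{-1}(C) \bigr). $$
I would combine this with the definable closedness of $\sigma$ and the non-Archimedean feature that balls are clopen in the $K$-topology. Fix $x_0 \in D$ and $\gamma \in \Gamma$, and take $C := \{ t \in K : v(t - f(x_0)) < \gamma \}$, the complement of the closed ball about $f(x_0)$ of radius $\gamma$. This $C$ is $\mathcal{L}$-definable and closed, whence $g^{-1}(C)$ is a closed definable subset of $E$ by continuity of $g$, and therefore $f^{-1}(C) = \sigma(g^{-1}(C))$ is closed in $D$. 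As $f(x_0)\notin C$, the point $x_0$ lies in the open set $D\setminus f^{-1}(C) = f^{-1}(\{ t : v(t-f(x_0))\geq\gamma \})$, which is a neighbourhood of $x_0$ on which $f$ stays within the ball of radius $\gamma$ about $f(x_0)$; this is exactly continuity of $f$ at $x_0$.

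The set-theoretic descent and the definability are routine. The only point requiring care is the continuity argument, and here everything hinges on the clopenness of balls: it is what turns the definable closedness of $\sigma$, a statement about images of closed sets, into the openness of preimages of balls under $f$. Thus the genuine difficulty has been front-loaded into Corollary~\ref{clo-th-cor-3}, and ultimately into the Closedness Theorem~\ref{clo-th} itself.
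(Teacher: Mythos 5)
Your argument is correct and is exactly the intended one: the paper leaves Corollary~\ref{clo-th-cor-4} as an immediate consequence of the definably closed quotient map of Corollary~\ref{clo-th-cor-3}, and your fleshing-out (set-theoretic descent, definability of the graph, and continuity via $f^{-1}(C)=\sigma(g^{-1}(C))$ tested on clopen, $\mathcal{L}$-definable complements of balls to accommodate the fact that $\sigma$ is only \emph{definably} closed) is the standard way to make that implication explicit. Nothing further is needed.
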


The closedness theorem will be proven in Section~3. The strategy
of proof in the analytic settings will generally follow the one in
the algebraic case from my papers~\cite{Now-Sel,Now-thm}. We rely,
in particular, on fiber shrinking and the local behavior of
definable functions of one variable. Again, we make use of
relative quantifier elimination for ordered abelian groups (in a
many-sorted language with imaginary auxiliary sorts) due to
Cluckers--Halupczok~\cite{C-H}. But now we apply elimination of
valued field quantifiers for the theory $T_{Hen,\mathcal{A}}$ and
b-minimal cell decompositions with centers (cf.~\cite{C-L}).


\begin{remark}
The closedness theorem holds also for analytic structures induced
by strictly convergent Weierstrass systems, because every such
structure can be extended in a definitional way (extension by
Henselian functions) to a separated analytic structure
(cf.~\cite{C-Lip}). Examples of such structures are the classical,
complete rank one valued fields with the Tate algebra of strictly
convergent power series.
\end{remark}

In Section~4, we give some applications of the closedness theorem,
namely two theorems on existence of the limit
(Proposition~\ref{limit-th2}) and on piecewise continuity
(Theorem~\ref{piece}). Note finally that our proof of the
closedness theorem makes use of a certain version of the former
result (Proposition~\ref{limit-th1}).



\section{Fields with analytic structure}
In this section we recall the concept of an analytic structure
(cf.~\cite[Section~4.1]{C-L}). Let $A$ be a commutative ring with
unit and with a fixed proper ideal $I \varsubsetneq A$. A {\em
separated} $(A, I)$-{\em system} is a certain system $\mathcal{A}$
of $A$-subalgebras $A_{m,n} \subset A[[\xi,\rho]]$, $m,n \in
\mathbb{N}$; here $A_{0,0} = A$. Two kinds of variables, $\xi$ and
$\rho$, play different roles. Roughly speaking, the variables
$\xi$ vary over the valuation ring (or the closed unit disc)
$K^{\circ}$ of a valued field $K$, and the variables $\rho$ vary
over the maximal ideal (or the open unit disc) $K^{\circ \circ}$
of $K$. $\mathcal{A}$ is called a {\em separated pre-Weierstrass
system} if two usual Weierstrass division theorems hold in each
$A_{m,n}$. When, in addition, such a pre-Weierstrass system
$\mathcal{A}$ satisfies a condition referring to the so-called
rings of $\mathcal{A}$-fractions, it is called a {\em separated
Weierstrass system} (loc.~cit.). This condition may be regarded as
a kind of weak Noetherian property, because it implies, in
particular, that if
$$ f = \sum_{\mu,\nu} \, a_{\mu \nu} \, \xi^{\mu} \rho^{\nu} \in
   A_{m,n}, $$
then the ideal of $A$ generated by the $a_{\mu \nu}$ is finitely
generated.

\vspace{1ex}

Let $\mathcal{A}$ be a separated Weierstrass system and $K$ be a
valued field. A {\em separated analytic} $\mathcal{A}$-{\em
structure} on the field K (loc.~cit.) is a collection of
homomorphisms $\sigma_{m,n}$ from $A_{m,n}$ to the ring of
$K^{\circ}$-valued functions on $(K^{\circ})^{m} \times (K^{\circ
\circ})^{n}$, $m,n \in \mathbb{N}$, such that

1) $\sigma_{0,0} (I) \subset K^{\circ \circ}$;

2) $\sigma_{m,n}(\xi_{i})$ and $\sigma_{m,n}(\rho_{j})$ are the
$i$-th and $(m+j)$-th coordinate functions on $(K^{\circ})^{m}
\times (K^{\circ \circ})^{n}$, respectively;

3) $\sigma_{m+1,n}$ and $\sigma_{m,n+1}$ extend $\sigma_{m,n}$,
where functions on $(K^{\circ})^{m} \times (K^{\circ \circ})^{n}$
are identified with those functions on
$$ (K^{\circ})^{m+1} \times (K^{\circ \circ})^{n} \ \ \ \text{or} \
   \ \ (K^{\circ})^{m} \times (K^{\circ \circ})^{n+1} $$
which do not depend on the coordinate $\xi_{m+1}$ or $\rho_{n+1}$,
respectively.

\vspace{1ex}

Further, consider a separated pre-Weierstrass $(A, I)$-system
$\mathcal{A}$ and assume that $A = F^{\circ}$ and $I = F^{\circ
\circ}$ for a valued field $F$. Then $\mathcal{A}$ is a
Weierstrass system iff for every $f \in A_{m,n}$, $f \neq 0$, $m,n
\in \mathbb{N}$, there is an element $c \in F$ such that $cf \in
A_{m,n}$ and the Gauss norm (which is then well defined) $\| cf \|
= 1$ (loc.~cit.).

\vspace{1ex}

Now let us recall some properties of analytic structures. Analytic
$\mathcal{A}$-structures preserve composition (op.~cit.,
Proposition~4.5.3). If the ground field $K$ is non-trivially
valued, then the function induced by a power series from
$A_{m,n}$, $m,n \in \mathbb{N}$, is the zero function iff the
image in $K$ of each of its coefficients is zero (op.~cit.,
Proposition~4.5.4).

\vspace{1ex}

\begin{remark}\label{ext-par}
When considering a particular field $K$ with analytic
$\mathcal{A}$-structure, one may assume that $\mathrm{ker}\,
\sigma_{0,0} = (0)$. Indeed, replacing $A$ by $A/\mathrm{ker}\,
\sigma_{0,0}$ yields an equivalent analytic structure on $K$ with
this property. Then $A = A_{0,0}$ can be regarded as a subring of
$K^{\circ}$. Moreover, by extension of parameters, one can get a
(unique) separated Weierstrass system $\mathcal{A}(K)$ over
$(K^{\circ},K^{\circ \circ})$ and $K$ has separated analytic
$\mathcal{A}(K)$-structure; a similar extension of parameters can
be performed for any subfield $F \subset K$ of parameters
(op.~cit., Theorem~4.5.7  f.f.). Further, a separated analytic
$\mathcal{A}$-structure on a valued field $K$ can be uniquely
extended to any algebraic exteksion $K'$ of $K$; in particular, to
the algebraic closure $K_{alg}$ of $K$ (op.~cit., Theorem~4.5.11).
The forgoing properties remain valid in the case of strictly
convergent Weierstrass systems too. Finally, every valued field
with separated analytic structure is Henselian (op.~cit.,
Proposition~4.5.10).
\end{remark}

\vspace{1ex}

Now we can describe the analytic language $\mathcal{L}$ of an
analytic structure $K$ determined by a separated Weierstrass
system $\mathcal{A}$. We begin by defining the semialgebraic
language $\mathcal{L}_{Hen}$. It is a two sorted language with the
main, valued field sort $K$, and the auxiliary $RV$-sort
$$ RV = RV(K) := RV^{*} \cup \{ 0 \}, \ \ \
   RV^{*}(K) := K^{\times}/(1 + K^{\circ \circ}); $$
here $A^{\times}$ denotes the set of units of a ring $A$. The
language of the valued field sort is the language of rings
$(0,1,+,-, \cdot)$. The language of the auxiliary sort is the
so-called inclusion language (op.~cit., Section~6.1). The only map
connecting the sorts is the canonical map
$$ rv: K \to RV(K), \ \ \ 0 \mapsto 0. $$
Since
$$ \widetilde{K}^{\times} \simeq (K^{\circ})^{\times}/(1 + K^{\circ \circ})
   \ \ \ \text{and} \ \ \ \Gamma \simeq K^{\times}/(K^{\circ})^{\times}, $$
we get the canonical exact sequence
$$ 1 \to \widetilde{K} \to RV(K) \to \Gamma \to 0. $$
This sequence splits iff the valued field $K$ has an angular
component map.

\vspace{1ex}

The analytic language $\mathcal{L} =
\mathcal{L}_{Hen,\mathcal{A}}$ is the semialgebraic language
$\mathcal{L}_{Hen}$ augmented on the valued field sort $K$ by the
reciprocal function $1/x$ (with $1/0 :=0$) and the names of all
functions of the system $\mathcal{A}$, together with the induced
language on the auxiliary sort $RV$ (op.~cit., Section~6.2). A
power series $f \in A_{m,n}$ is construed via the analytic
$\mathcal{A}$-structure on their natural domains and as zero
outside them. More precisely, $f$ is interpreted as a function
$$ \sigma (f): (K^{\circ})^{m} \times (K^{\circ \circ})^{n} \to
   K^{\circ}, $$
extended by zero on $K^{m+n} \setminus (K^{\circ})^{m} \times
(K^{\circ \circ})^{n}$.

\vspace{1ex}

In the equicharacteristic case, however, the induced language on
the auxiliary sort $RV$ further coincides with the semialgebraic
inclusion language. It is so because then \cite[Lemma~6.3.12]{C-L}
can be strengthen as follows, whereby \cite[Lemma~6.3.14]{C-L} can
be directly reduced to its algebraic analogue. Consider a strong
unit on the open ball $B = K_{alg}^{\circ \circ}$. Then $rv
(E^{\sigma})(x)$ is constant when $x$ varies over $B$. This is no
longer true in the mixed characteristic case. There, a weaker
conclusion asserts that the functions $rv_{n} (E^{\sigma})(x)$, $n
\in \mathbb{N}$, depend only on $rv_{n}(x)$ when $x$ varies over
$B$; actually, $rv_{n} (E^{\sigma})(x)$ depend only on $\: x \! \!
\mod (n \cdot K^{\circ \circ}_{alg})$ when $x$ varies over $B$, as
indicated in~\cite[Remark~A.1.12]{C-Lip}. Under the circumstances,
the residue field $\widetilde{K}$ is orthogonal to the value group
$\Gamma_{K}$, whenever the ground field $K$ has an angular
component map or, equivalently, when the auxiliary sort $RV$
splits (in a non-canonical way):
$$ RV(K) \simeq \widetilde{K} \times \Gamma_{K}. $$
This means that every definable set in the auxiliary sort $RV(K)$
is a finite union of the Cartesian products of some sets definable
in the residue field sort $\widetilde{K}$ (in the language of
rings) and in the value group sort $\Gamma_{K}$ (in the language
of ordered groups). The orthogonality property will often be used
in the paper, similarly as it was in the algebraic case treated in
our papers~\cite{Now-Sel,Now-thm}.

\begin{remark}
Not all valued fields $K$ have an angular component map, but it
exists if $K$ has a cross section, which happens whenever $K$ is
$\aleph_{1}$-saturated (cf.~\cite[Chap.~II]{Ch}). Moreover, a
valued field $K$ has an angular component map whenever its residue
field $\Bbbk$ is $\aleph_{1}$-saturated
(cf.~\cite[Corollary~1.6]{Pa2}). In general, unlike for $p$-adic
fields and their finite extensions, adding an angular component
map does strengthen the family of definable sets. Since the
$K$-topology is $\mathcal{L}$-definable, the closedness theorem is
a first order property. Therefore it can be proven using
elementary extensions, and thus one may assume that an angular
component map exists.
\end{remark}

Let $\mathcal{T}_{Hen,\mathcal{A}}$ be the theory of all Henselian
valued fields of characteristic zero with analytic
$\mathcal{A}$-structure. The crucial result about analytic
structures is the following \cite[Theorem~6.3.7]{C-L}.

\begin{theorem}\label{ball}
The theory $\mathcal{T}_{Hen,\mathcal{A}}$ eliminates valued field
quantifiers, is b-minimal with centers and preserves all balls.
Moreover, $\mathcal{T}_{Hen,\mathcal{A}}$ has the Jacobian
property. \hspace*{\fill} $\Box$
\end{theorem}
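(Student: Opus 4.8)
The plan is to derive all four assertions from a single engine---Weierstrass division---by first building a normal form for $\mathcal{L}$-terms, then eliminating valued field quantifiers, and finally reading off b-minimality with centers, preservation of balls and the Jacobian property from that elimination together with an analysis of one-variable definable functions.

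First I would fix the term structure. The defining axioms of the separated Weierstrass system $\mathcal{A}$ supply two Weierstrass division theorems in every $A_{m,n}$, and the weak Noetherian condition (finite generation of the coefficient ideals) guarantees that each such division produces only finitely much data. Applying this repeatedly---after a suitable change of variables making a series regular in its last valued field variable---I would show that, once the domain $(K^{\circ})^{m} \times (K^{\circ \circ})^{n}$ is partitioned into finitely many pieces cut out by $rv$-conditions on subterms, every term is a unit times a distinguished polynomial in that last variable. This reduces questions about zeros and valuations of analytic functions to the corresponding questions about polynomials, where the Henselian property (every model is Henselian, as recalled in Remark~\ref{ext-par}) and the reciprocal $1/x$ are available.

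The core step is the elimination of valued field quantifiers. Given $\exists x\,\phi(x,y)$ with $x$ in the valued field sort, I would replace all occurring terms by their Weierstrass normal forms on each piece, so that $\phi$ becomes a Boolean combination of conditions $rv(t_{i}(x,y)) \in S$ with the $t_{i}$ distinguished polynomials in $x$. Solvability in $x$ is then decided by a Newton-polygon and Hensel analysis of these polynomials, and the only information required is the collection of $rv$-values of their coefficients, which lives in the auxiliary sort $RV$. Here equicharacteristic zero is used decisively: by the orthogonality discussed above, $RV$ splits as $\widetilde{K} \times \Gamma$, so the residual statement is expressible by separate conditions in the residue field (language of rings) and the value group (ordered-group language), and no valued field quantifier survives.

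Finally I would obtain the remaining properties from quantifier elimination. For the Jacobian property I would apply the term normalization to a one-variable definable function $f$: on each resulting ball it agrees with some $\sigma(F)$, and Weierstrass preparation exhibits $f$ as a unit times a power of $(x-c)$ about a definable center $c$; since in equicharacteristic zero the $rv$ of a strong unit is locally constant (as recalled above), on small enough balls $f$ is a bijection onto a ball with $v(f(x)-f(y)) = v(f'(\xi)) + v(x-y)$, which is exactly the Jacobian property. The same cell structure---balls around definable centers on which a coordinate is a unit times a power of $(x-c)$---yields b-minimality with centers, and preservation of balls is read off from the fact that each cell maps a ball onto a ball. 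I expect the main obstacle to be the term normalization uniformly across all models: passing from the abstract Weierstrass-system axioms to a concrete, first-order-expressible normal form requires controlling how the finitely many coefficient data from each division interact with the $rv$-partition, and it is precisely the weak Noetherian property that keeps this data finite and hence definable.
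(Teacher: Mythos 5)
The first thing to note is that the paper does not prove this theorem at all: it is imported verbatim from the literature (the text introduces it as ``the crucial result about analytic structures'' with a reference to Theorem~6.3.7 of Cluckers--Lipshitz/Loeser and closes the statement with $\Box$). So there is no in-paper argument to compare yours against. What you have written is a sketch of that external proof, and it does capture the correct broad strategy: Weierstrass division as the engine, a normal form for terms, elimination of valued-field quantifiers relative to the auxiliary sort, and the geometric properties (b-minimality with centers, preservation of balls, Jacobian property) read off from the resulting analysis of one-variable functions.

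As a proof, however, the sketch has genuine gaps, two of which are structural. First, in the quantifier-elimination step you let the splitting $RV \simeq \widetilde{K}\times\Gamma$ carry the weight (``equicharacteristic zero is used decisively: \ldots $RV$ splits\ldots''). That splitting requires an angular component map, which not every model of $\mathcal{T}_{Hen,\mathcal{A}}$ possesses --- the paper's own remark on angular components makes exactly this point and is why the author passes to an elementary extension before using orthogonality. Since quantifier elimination is a statement about all models of the theory, you cannot assume the splitting; the correct formulation eliminates valued-field quantifiers relative to the single sort $RV$, with no splitting. Second, your term normalization omits the Henselian functions $h_{m}$: after Weierstrass preparation reduces a term to a unit times a distinguished polynomial in the last variable, eliminating an existential quantifier over a root of that polynomial requires \emph{naming} the root by a definable function, and that is precisely the role of the $h_{m}$ (the language $\mathcal{L}^{*}$ in the paper); without them the ``Newton-polygon and Hensel analysis'' does not terminate in a quantifier-free formula. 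Beyond these, several steps are substantially harder than indicated: making a separated power series regular in its last variable must respect the two kinds of variables $\xi$ and $\rho$ and genuinely uses the rings-of-$\mathcal{A}$-fractions condition, not merely the two division theorems plus weak Noetherianity; terms involve $1/x$ and compositions, so they are not single power series on pieces; and the Jacobian property is an $rv$-level statement, strictly stronger than the valuation identity $v(f(x)-f(y)) = v(f'(\xi)) + v(x-y)$ that you write down.
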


Therefore the theory $\mathcal{T}_{Hen,\mathcal{A}}$ admits
b-minimal cell decompositions with centers (cf.~\cite{C-L}).

\section{Proof of the closedness theorem}

From now on we shall assume that the ground field $K$ with
separated analytic structure $\mathcal{A}$ is of
equicharacteristic zero, and that $K$ has an angular component
map. In the algebraic case, the proofs of the closedness theorem
given in our papers~\cite{Now-Sel,Now-thm}) make use of the
following three main tools: the theorem on existence of the limit
(\cite[Proposition~5.2]{Now-Sel} and \cite[Theorem~5.1]{Now-thm}),
fiber shrinking (\cite[Proposition~6.1]{Now-Sel,Now-thm}) and cell
decomposition in the sense of Pas.

\vspace{1ex}

Fiber shrinking was reduced, by means of elimination of valued
field quantifiers, to Lemma~\ref{line-1} below
(\cite[Lemma~6.2]{Now-thm}), which, in turn, was obtained via
relative quantifier elimination for ordered abelian groups. That
approach can be repeated verbatim in the analytic settings.

\begin{lemma}\label{line-1}
Let $\Gamma$ be an ordered abelian group and $P$ be a definable
subset of $\Gamma^{n}$. Suppose that $(\infty,\ldots,\infty)$ is
an accumulation point of $P$, i.e.\ for any $\delta \in \Gamma$
the set
$$ \{ x \in P: x_{1} > \delta, \ldots, x_{n} > \delta \} \neq \emptyset $$
is non-empty. Then there is an affine semi-line
$$ L = \{ (r_{1}t + \gamma_{1},\ldots,r_{n}t + \gamma_{n}): \, t
   \in \Gamma, \ t \geq 0 \} \ \ \ \text{with} \ \ r_{1},\ldots,r_{n}
   \in \mathbb{N}, $$
passing through a point $\gamma = (\gamma_{1},\ldots,\gamma_{n})
\in P$ and such that $(\infty,\ldots,\infty)$ is an accumulation
point of the intersection $P \cap L$ too. \hspace*{\fill} $\Box$
\end{lemma}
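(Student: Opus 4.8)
The plan is to reduce the statement, by relative quantifier elimination for ordered abelian groups in the many-sorted language of Cluckers--Halupczok~\cite{C-H}, to a purely combinatorial statement about rational polyhedral cones, and then to build the semi-line explicitly. First I would invoke relative quantifier elimination to replace $P$ by an equivalent quantifier-free description: up to the auxiliary (imaginary) sorts, every definable subset of $\Gamma^{n}$ is a finite boolean combination of conditions of the two types
\[ \ell(x)\;\square\;\beta, \qquad \lambda(x)\equiv c \pmod{d\Gamma}, \]
where $\ell,\lambda$ are $\mathbb{Z}$-linear forms in $x=(x_{1},\dots,x_{n})$, $\square\in\{<,\le,=,>,\ge\}$, $\beta,c\in\Gamma$ are parameters, and the congruence is taken modulo the subgroup $d\Gamma$. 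Passing to disjunctive normal form writes $P=\bigcup_{j}P_{j}$ as a finite union of cells, each $P_{j}$ being a conjunction of finitely many such inequalities and congruences; splitting every $\neq$ into $<$ and $>$ and orienting each half-space to lower-bound form, I may assume each cell is cut out by inequalities $\ell_{k}(x)\,\square_{k}\,\beta_{k}$ with $\square_{k}\in\{>,\ge\}$ together with congruences. Since the union is finite and $(\infty,\dots,\infty)$ is an accumulation point of $P$, a pigeonhole argument on the threshold $\delta$ shows that $(\infty,\dots,\infty)$ is already an accumulation point of one cell, which I denote $Q$.

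Next I would separate the two kinds of conditions defining $Q$. Let $Q'$ be obtained from $Q$ by discarding all congruence conditions, so that $Q'$ is defined purely by the inequalities $\ell_{k}(x)\,\square_{k}\,\beta_{k}$ and $Q\subseteq Q'$; in particular $Q'$ still accumulates at $(\infty,\dots,\infty)$. The crux is to produce an all-positive integer recession direction for $Q'$. Working in the real polyhedron $\overline{Q'}=\{x\in\mathbb{R}^{n}:\ell_{k}(x)\ge\beta_{k}\}$, obtained by relaxing the strict inequalities and tensoring up through $\Gamma\hookrightarrow\Gamma\otimes\mathbb{Q}$, its recession cone is $C=\{r:\ell_{k}(r)\ge 0\ \text{for all }k\}$, and I claim $C\cap\mathbb{R}^{n}_{>0}\neq\emptyset$. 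Were it empty, separating the convex cone $C$ from the open orthant would yield a nonzero $w\ge 0$ with $\langle w,r\rangle\le 0$ for all $r\in C$; then, by the standard fact that a linear functional is bounded above on a polyhedron iff it is nonpositive on the recession cone, $\langle w,\cdot\rangle$ would be bounded above on $\overline{Q'}$, contradicting the existence of points of $Q'$ with all coordinates arbitrarily large. As $C$ is a rational polyhedral cone I may choose $r\in C\cap\mathbb{Q}^{n}_{>0}$, clear denominators, and multiply by $m:=\mathrm{lcm}$ of all moduli $d$ occurring in the congruences of $Q$, obtaining $r\in\mathbb{N}^{n}$ with every $r_{i}>0$, every $r_{i}\equiv 0\pmod{m}$, and $\ell_{k}(r)\ge 0$ for all $k$.

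Finally I would assemble the semi-line. Choose any $\gamma\in Q$ (nonempty, as $Q$ accumulates at infinity) and set $L=\{rt+\gamma:\,t\in\Gamma,\ t\ge 0\}$. For each inequality we have $\ell_{k}(rt+\gamma)=\ell_{k}(\gamma)+\ell_{k}(r)\,t\ge\ell_{k}(\gamma)$, since $\ell_{k}(r)\in\mathbb{N}$ and $t\ge 0$, so the inequality, being satisfied by $\gamma$, persists along $L$. For each congruence with modulus $d\mid m$ we have $d\mid\lambda(r)$, hence $\lambda(r)\,t\in d\Gamma$, so $\lambda(rt+\gamma)\equiv\lambda(\gamma)\equiv c\pmod{d\Gamma}$ and the congruences are preserved as well. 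Therefore $L\subseteq Q\subseteq P$, and since each $r_{i}>0$ every coordinate of $rt+\gamma$ tends to $\infty$ with $t$, so $(\infty,\dots,\infty)$ is an accumulation point of $P\cap L=L$. The main obstacle is the middle step: verifying that relative quantifier elimination genuinely delivers the above normal form (so that only $\mathbb{Z}$-linear inequalities and congruences survive) and extracting the all-positive rational recession direction from the accumulation hypothesis; once the slope is taken divisible by the common modulus $m$, the congruences become invariant along the ray and the construction closes up.
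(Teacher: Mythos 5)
The paper itself gives no proof of this lemma: it quotes it from \cite[Lemma~6.2]{Now-thm} and records only that it ``was obtained via relative quantifier elimination for ordered abelian groups,'' so your top-level strategy (reduce via the Cluckers--Halupczok elimination \cite{C-H} to a combinatorial statement about linear inequalities and congruences, then build the ray explicitly) matches the intended one. Your last two steps are also essentially correct: the pigeonhole reduction to a single cell works because the sets of admissible thresholds $\delta$ are downward closed; the Farkas-type extraction of an all-positive rational recession direction goes through over $\Gamma\otimes\mathbb{Q}$ because the forms $\ell_k$ have integer coefficients (though you should not literally write $\{x\in\mathbb{R}^n:\ell_k(x)\ge\beta_k\}$ with $\beta_k\in\Gamma$); and taking the direction divisible by the lcm of the moduli does render congruences modulo $d\Gamma$ invariant along the ray.

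The genuine gap is the normal form in your first step. The assertion that every definable subset of $\Gamma^{n}$ is a boolean combination of conditions $\ell(x)\,\square\,\beta$ and $\lambda(x)\equiv c\pmod{d\Gamma}$ is the quantifier elimination for Presburger arithmetic ($\mathbb{Z}$-groups), and it is false for an arbitrary ordered abelian group --- this is exactly why one must invoke Cluckers--Halupczok rather than the classical elimination. In their language the atomic conditions on main-sort variables compare $\mathbb{Z}$-linear forms and impose congruences only \emph{modulo subgroups of the form} $m\Gamma+\Gamma_{\alpha}$, where $\Gamma_{\alpha}$ runs through definable families of convex subgroups indexed by the auxiliary imaginary sorts, and the elimination is only relative: quantifiers over those auxiliary sorts survive, so a definable set is not literally a finite union of your cells. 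A condition such as $\ell(x)\equiv\beta\pmod{\Gamma_{\alpha}}$, or an inequality read in the quotient $\Gamma/\Gamma_{\alpha}$, is not preserved along your ray unless $\ell(r)=0$, since for large $t\ge 0$ the element $t\,\ell(r)$ escapes every proper convex subgroup. These are additional linear constraints on the admissible direction $r$ that your recession cone $C$ does not record, and proving that an all-positive direction compatible with them still exists --- using the accumulation hypothesis --- is precisely the nontrivial content of the general case. As written, your argument is complete only when the Presburger-style normal form applies (e.g.\ for $\mathbb{Z}$-groups or archimedean $\Gamma$), not for the arbitrary ordered abelian groups the lemma claims.
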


In a similar manner, one can obtain the following

\begin{lemma}\label{line-2}
Let $P$ be a definable subset of $\Gamma^{n}$ and
$$ \pi: \Gamma^{n} \to \Gamma, \ \ \ (x_{1},\ldots,x_{n}) \mapsto
   x_{1} $$
be the projection onto the first factor. Suppose that $\infty$ is
an accumulation point of $\pi(P)$. Then there is an affine
semi-line
$$ L = \{ (r_{1}t + \gamma_{1},\ldots,r_{n}t + \gamma_{n}): \, t
   \in \Gamma, \ t \geq 0 \} \ \ \text{with} \ \ r_{1},\ldots,r_{n}
   \in \mathbb{N}, \, r_{1} >0, $$
passing through a point $\gamma = (\gamma_{1},\ldots,\gamma_{n})
\in P$ and such that $\infty$ is an accumulation point of $\pi(P
\cap L)$ too.
\end{lemma}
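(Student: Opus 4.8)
The plan is to follow the proof of Lemma~\ref{line-1} almost verbatim, replacing the role of the point $(\infty,\ldots,\infty)$ by the single coordinate direction $x_{1}$. As there, the starting point is relative quantifier elimination for ordered abelian groups due to Cluckers--Halupczok: after adjoining the auxiliary (imaginary) sorts, every definable subset of $\Gamma^{n}$ becomes, in the main sort, a finite union of pieces each cut out by finitely many $\mathbb{Z}$-linear (in)equalities $\ell(x)\,\square\,\gamma$ and congruence conditions $\ell(x)\in c+m\Gamma$, where the parameters $\gamma,c$ and the moduli $m$ are controlled by the auxiliary sorts. Since $\pi(P)$ accumulates at $\infty$ and there are only finitely many such pieces, at least one piece $P_{0}$ has first coordinate unbounded above; I would replace $P$ by $P_{0}$ and prove the statement for it.

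On the piece $P_{0}$ the defining linear inequalities determine a rational polyhedral recession cone $C\subset\mathbb{Q}^{n}$, namely the set of directions $d$ along which the non-strict part of the inequalities is preserved. Because the first coordinate is unbounded above on $P_{0}$, the cone $C$ must contain a vector with strictly positive first entry; clearing denominators yields an integral direction $r=(r_{1},\ldots,r_{n})$ with $r_{1}>0$ (the remaining entries $r_{2},\ldots,r_{n}$ being produced exactly as in Lemma~\ref{line-1}). To make the ray compatible with the congruence conditions I would pass from $r$ to a positive integer multiple, replacing $r$ by $mr$ where $m$ is the least common multiple of the moduli occurring in $P_{0}$; this keeps $r_{1}>0$ and keeps $r$ in $C$, and now $r\equiv 0$ modulo every relevant modulus, so translating by multiples of $r$ preserves all congruence classes. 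Finally I would pick a base point $\gamma\in P_{0}$ lying deep enough inside the piece (so that the strict inequalities remain strict along the whole ray); then $\gamma+tr\in P_{0}\subset P$ for every $t\geq 0$, and hence the semi-line $L=\{\gamma+tr:\, t\in\Gamma,\ t\geq 0\}$ satisfies $\pi(P\cap L)\supset\{\gamma_{1}+t r_{1}:\, t\geq 0\}$, which accumulates at $\infty$ as required.

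The routine bookkeeping is the same as for Lemma~\ref{line-1}; the three points that need care are the following. First, one must run the relative quantifier elimination so that the main-sort description of $P$ really is piecewise $\mathbb{Z}$-linear with the auxiliary-sort data (parameters and moduli) separated off --- this is exactly what Cluckers--Halupczok provides and is what makes the recession-cone argument legitimate. Second, the congruence conditions coming from the sorts $\Gamma/m\Gamma$ must be made transparent to the translation along $L$, which is the reason for passing to the multiple $mr$. The main obstacle, and the only genuinely new point compared with Lemma~\ref{line-1}, is securing $r_{1}>0$: here one uses that it is $\pi(P)$, and not $P$ itself, that accumulates at infinity, so it suffices that the recession direction grow in the single coordinate $x_{1}$, and the positivity of $r_{1}$ then follows from the unboundedness of the first coordinate on the chosen piece $P_{0}$.
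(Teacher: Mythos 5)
Your proposal is correct and matches the paper's intended argument: the paper gives no separate proof of this lemma, saying only that it is obtained ``in a similar manner'' to Lemma~\ref{line-1}, i.e.\ via the Cluckers--Halupczok relative quantifier elimination for ordered abelian groups, which is exactly the route you take. Your additional observation --- that the only new point is extracting a recession direction with $r_{1}>0$ from the unboundedness of the \emph{first coordinate} of the chosen piece, rather than of all coordinates simultaneously --- is precisely the modification the paper's phrase ``in a similar manner'' is glossing over.
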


In this paper, however, a suitable analytic version of the theorem
on existence of the limit and application of b-minimal cell
decompositions require some new ideas and work. Actually, we first
prove a certain version of the former
(Proposition~\ref{limit-th1}), making use of the theorem on term
structure (\cite[Theorem~6.3.8]{C-L}), recalled below. Another
analytic version (Proposition~\ref{limit-th2}) will be established
in Section~4 by means of the closedness theorem. In further
reasonings, we shall often make use of Lemmas~\ref{line-1}
and~\ref{line-2}.

\vspace{1ex}

Denote by $\mathcal{L}^{*}$ the analytic language $\mathcal{L}$
augmented by all Henselian functions
$$ h_{m}: K^{m+1} \times RV(K) \to K,  \ \ m \in \mathbb{N}, $$
which are defined by means of a version of Hensel's lemma
(cf.~\cite{C-L}, Section~6.1).

\begin{theorem}\label{term}
Let $K$ be a Henselian field with analytic
$\mathcal{A}$-structure. Let $f: X \to K$, $X \subset K^{n}$, be
an $\mathcal{L}(B)$-definable function for some set of parameters
$B$. Then there exist an $\mathcal{L}(B)$-definable function $g: X
\to S$ with $S$ auxiliary and an $\mathcal{L}^{*}(B)$-term $t$
such that
$$ f(x) = t(x,g(x)) \ \ \ \text{for all} \ \ x \in X. $$
\end{theorem}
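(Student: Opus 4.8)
The plan is to reduce the statement to an explicit extraction of the unique valued-field value $f(x)$ by means of Weierstrass preparation and Hensel's lemma, the latter being exactly what the Henselian functions $h_{m}$ of $\mathcal{L}^{*}$ encode. First I would pass to the graph $\Gamma_{f} \subset X \times K$, which is an $\mathcal{L}(B)$-definable set, and invoke elimination of valued field quantifiers (Theorem~\ref{ball}) to present $\Gamma_{f}$ by a formula in which all remaining quantifiers range over the auxiliary sort. Thus, up to auxiliary-sort conditions, the relation $y = f(x)$ becomes a Boolean combination of atomic conditions of the shape $\psi(rv(\tau_{1}(x,y)),\ldots,rv(\tau_{k}(x,y)))$, where the $\tau_{i}$ are $\mathcal{L}$-terms built from the field operations, the reciprocal, and the analytic function symbols, and $\psi$ lives in the inclusion language on $RV$. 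The whole problem is then to solve for the single value $y$ as an $\mathcal{L}^{*}(B)$-term in $x$ and in finitely many $RV$-valued parameters, which will become the components of $g$.

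Next I would make the dependence on the distinguished variable $y$ polynomial. Applying Weierstrass division and preparation in $y$ to each analytic term $\tau_{i}(x,\cdot)$ replaces it, on a suitable partition of $X$ into pieces on which $y$ ranges over a ball, by a Weierstrass polynomial in $y$ of bounded degree times a unit, the coefficients remaining inside the system $\mathcal{A}$ and hence being $\mathcal{L}$-terms in $x$. After a b-minimal cell decomposition with centers I may assume $X$ is a single cell and that on it the condition defining $f(x)$ amounts to $y$ being the unique root of one such Weierstrass polynomial $P(x,\cdot)$ whose $rv$-class is prescribed by auxiliary data. The centers of the decomposition and the $rv$-classes that select the root are recorded by a single $\mathcal{L}(B)$-definable map $g: X \to S$ into an auxiliary sort.

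The final step realizes this root as a term. With the $rv$-class of $y - c(x)$ relative to the center $c$ fixed by $g(x)$, the Jacobian property guarantees that $P(x,\cdot)$ has, in the corresponding ball, a root whose reduction is simple, so Hensel's lemma applies and produces $y = h_{m}(a_{0}(x),\ldots,a_{m}(x), g(x))$, where the $a_{j}(x)$ are the coefficient terms of $P$. Taking $t$ to be this composition yields $f(x) = t(x, g(x))$ on the cell, and gathering the finitely many cells into one $g$ (whose auxiliary component also names the active cell) and one term $t$ completes the argument, the inclusion language on $RV$ together with the Henselian symbols being rich enough to combine the finitely many cases. The main obstacle is precisely this last extraction: arranging, on each cell, that the relevant analytic relation degenerates to a single Hensel-solvable equation with a simple root pinned down by $rv$-data, since multiple or ramified roots force a refinement of the cell decomposition and the passage to the functions $h_{m}$; it is here that the Jacobian property and preservation of balls from Theorem~\ref{ball} are indispensable. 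A related subtlety is that the centers must themselves be of term form, which is why in the source this result is established simultaneously with b-minimality and the Jacobian property by an induction that keeps the centers $\mathcal{L}^{*}$-terms at every stage.
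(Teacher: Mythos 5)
You should first note that the paper itself offers no proof of Theorem~\ref{term}: it is imported verbatim as \cite[Theorem~6.3.8]{C-L} and closed with a box, so there is nothing internal to compare your argument against. Judged on its own terms, your proposal correctly identifies the shape of the argument in the literature (quantifier elimination to describe the graph, Weierstrass division to make the dependence on the distinguished variable polynomial, cell decomposition with centers, and the Henselian functions $h_{m}$ to name roots), but it is a plan rather than a proof, and the gap sits exactly where you place your ``main obstacle.'' Quantifier elimination hands you a description of the graph of $f$ as a Boolean combination of $RV$-conditions on terms $\tau_{i}(x,y)$; converting that description into an explicit $\mathcal{L}^{*}$-term for the unique $y$ is the entire content of the theorem, and your sketch asserts rather than shows that on each cell this degenerates to a single Hensel-solvable Weierstrass polynomial with a simple root pinned down by $rv$-data. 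In particular, the one-sentence appeal to Weierstrass preparation hides a substantive reduction: the $\tau_{i}$ are built from analytic symbols composed with field operations and the reciprocal, and bringing them into the domain of Weierstrass division in $y$ requires the machinery of rings of $\mathcal{A}$-fractions, the weak Noetherian property, and a preliminary decomposition of the $y$-range into balls and annuli --- none of which is supplied.

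There is also a circularity you flag but do not resolve. You invoke Theorem~\ref{ball} (b-minimal cell decomposition with centers, the Jacobian property) as an input, while conceding in your last sentence that in the source these are established \emph{simultaneously} with the term structure by a joint induction that keeps the centers in $\mathcal{L}^{*}$-term form at every stage. If the centers of the cell decomposition are only known to be $\mathcal{L}$-definable maps rather than terms, your final step produces $y = h_{m}(a_{0}(x),\ldots,a_{m}(x),g(x))$ with coefficients $a_{j}$ that are again merely definable functions, and the induction does not close. A correct write-up must either organize the whole argument as that simultaneous induction (as in \cite{C-L}) or verify that the cell decomposition of Theorem~\ref{ball} already comes with term centers --- a point your proposal leaves open. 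As it stands, the proposal is an accurate road map with the hardest leg of the journey marked ``here be the proof.''
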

\hspace*{\fill} $\Box$

We turn to the following analytic version of the theorem on
existence of the limit, which also may be regarded as a version of
Puiseux's theorem.

\begin{proposition}\label{limit-th1}
Let $f:E \to K$ be an $\mathcal{L}$-definable function on a subset
$E$ of $K$ and suppose $0$ is an accumulation point of $E$. Then
there is an $\mathcal{L}$-definable subsets $F \subset E$ with
accumulation point $0$ and a point $w \in \mathbb{P}^{1}(K)$ such
that
$$ \lim_{x \rightarrow 0}\, f|F\, (x) = w. $$
Moreover, we can require that
$$ \{ (x,f(x)): x \in F \} \subset \{ (x^{r}, \phi(x)): x \in G
   \}, $$
where $r$ is a positive integer and $\phi$ is a definable
function, a composite of some functions induced by series from
$\mathcal{A}$ and of some algebraic power series (coming from
Henselian functions $h_{m}$). Then, in particular, the definable
set
$$ \{ (v(x), v(f(x))): \; x \in (F \setminus \{0 \} \}
   \subset \Gamma \times (\Gamma \cup \{\infty \}) $$
is contained in an affine line with rational slope
$$ l = \frac{p}{q} \cdot k + \beta,  $$
with $p,q \in \mathbb{Z}$, $q>0$, $\beta \in \Gamma$, or in\/
$\Gamma \times \{ \infty \}$.
\end{proposition}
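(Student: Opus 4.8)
The plan is to prove Proposition~\ref{limit-th1} by reducing the
one-variable analytic limit problem to the structure provided by
Theorem~\ref{term}. First I would apply the theorem on term
structure to the definable function $f: E \to K$. This yields an
$\mathcal{L}(B)$-definable map $g: E \to S$ into an auxiliary sort
$S$ together with an $\mathcal{L}^{*}(B)$-term $t$ such that
$f(x) = t(x, g(x))$ for all $x \in E$. Since $S$ is auxiliary and
$0$ is an accumulation point of $E$, I would use the orthogonality
of the residue field and the value group (so that definable
subsets of $RV$ are finite unions of Cartesian products) to split
$E$ into finitely many definable pieces on each of which $g$ takes
values in a fixed fiber-structure; shrinking to one such piece and
applying Lemma~\ref{line-2} (or its one-variable form) to the image
of $v$ on $E \setminus \{0\}$, I would extract a definable
$F \subset E$ still accumulating at $0$ along which the auxiliary
data are controlled, so that $f|F$ is given by a single term with
fixed auxiliary inputs.

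The next step is to understand the term $t$ as an honest analytic
function of the single variable $x$. On $F$, after the reduction
above, $f(x)$ equals a composite built from the reciprocal, the
ring operations, the functions $\sigma(h)$ induced by series
$h \in \mathcal{A}$, and the Henselian functions $h_m$, all
evaluated on $x$ and on frozen auxiliary parameters. The Henselian
functions, being defined by Hensel's lemma, are precisely roots of
polynomials and thus contribute algebraic power series. I would
then invoke a Puiseux-type normalization: by passing to a
substitution $x = x'^{\,r}$ for a suitable positive integer $r$, I
can arrange that all the fractional exponents appearing implicitly
in these algebraic roots become integral, so that on the resulting
definable set $G$ one has the parametrization
$$ \{ (x, f(x)): x \in F \} \subset \{ (x'^{\,r}, \phi(x')): x' \in G \}, $$
where $\phi$ is a composite of functions induced by series from
$\mathcal{A}$ together with algebraic power series coming from the
$h_m$. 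This is exactly the second assertion of the proposition, and
it is the technical heart of the argument.

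With the parametrization in hand, the existence of the limit and
the valuation-linearity statement follow. Because $\phi$ is a
convergent composite of analytic and algebraic power series in the
single variable $x'$, its leading behaviour as $x' \to 0$ is
governed by a single lowest-order term; I would read off from this
leading term a dominant monomial $c \cdot x'^{\,k}$ (with
$c \in K^{\times}$ and $k$ an integer, possibly negative, or
$\phi \equiv 0$). Setting $w$ to be the corresponding point of
$\mathbb{P}^{1}(K)$ — finite if $k \geq 0$, the point at infinity
if $k < 0$, and $0$ if $\phi$ vanishes identically — gives
$\lim_{x \to 0} f|F(x) = w$. The same leading-term analysis shows
that $v(f(x)) = \frac{k}{r}\, v(x) + v(c) + o(1)$ along $F$, and
after restricting to a definable subset where the lower-order
contributions do not perturb the valuation, the set
$$ \{ (v(x), v(f(x))): x \in F \setminus \{0\} \} $$
lies on the affine line $l = \frac{p}{q}\, k + \beta$ with
$p = k$, $q = r$, $\beta = v(c)$, or in $\Gamma \times \{\infty\}$
when $w = 0$.

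The main obstacle I anticipate is the Puiseux normalization step:
controlling the fractional exponents introduced by the Henselian
functions $h_m$ uniformly, so that a single common $r$ suffices on
a definable set accumulating at $0$. This requires that the
branching of the algebraic power series be bounded along $F$, which
should follow from the finiteness built into the term $t$ (it
involves only finitely many $h_m$) combined with b-minimality and
the Jacobian property from Theorem~\ref{ball}; nevertheless,
verifying that the frozen auxiliary data genuinely pin down a single
analytic branch, rather than merely a finite set of branches, is
where the real care is needed, and where I expect to lean most
heavily on elimination of valued field quantifiers and on the
orthogonality of $\widetilde{K}$ and $\Gamma_K$.
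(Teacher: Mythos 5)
Your skeleton matches the paper's: apply the term-structure theorem (Theorem~\ref{term}), use orthogonality of $\widetilde{K}$ and $\Gamma$ to freeze the residue-field part of the auxiliary data (reducing to $\overline{ac}=1$ and $g$ valued in $\Gamma^{s}$), and control the value-group data via Lemma~\ref{line-2}. But the step you yourself flag as ``the technical heart'' --- turning the Henselian functions $h_{m}$ into algebraic power series after a power substitution $x=x'^{\,r}$ --- is exactly the content of the proposition's ``moreover'' clause, and you assert it rather than prove it. Saying you would ``invoke a Puiseux-type normalization'' begs the question: for a general Henselian valued field of equicharacteristic zero there is no off-the-shelf Puiseux theorem to invoke (the proposition itself is presented as a version of Puiseux's theorem), and the uniformity of a single exponent $r$ along a definable set accumulating at $0$ is precisely what must be established.

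The paper supplies the missing mechanism by inducting on the complexity of the term $t$. In the key case $t(x,g(x))=h_{m}(a_{0}(x),\ldots,a_{m}(x),g_{0}(x))$, the induction hypothesis already gives the normalized parametrizations $\{(x,a_{i}(x)):x\in F\}\subset\{(x^{r},\alpha_{i}(x)):x\in G\}$ with a common $r$; Lemma~\ref{line-2} pins $g_{0}$ to an affine relation $p\,v(x)+q\,g_{0}(x)+v(a)=0$; then, restricting to points $x=c^{r}t^{Nqr}$ and performing the homothetic change of variable $Z=t^{Npr}T/d$ in the polynomial $P(x,T)=\sum a_{i}(x)T^{i}$, one obtains a polynomial $Q(t,Z)$ with power-series coefficients satisfying $v(Q(0,1))>0$ and $v\bigl(\frac{\partial Q}{\partial Z}(0,1)\bigr)=0$, so Hensel's lemma for power series produces the root $b$ as an algebraic power series in $t$. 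This is how a single branch (not merely a finite set of branches) and a single common $r$ are secured: the induction hypothesis propagates the exponent through nested terms, and the normalization $\overline{ac}=1$ together with the defining conditions of $h_{m}$ select the branch. Without the induction on term complexity your argument cannot handle compositions (e.g.\ an $h_{m}$ whose coefficient terms themselves involve Henselian or analytic functions), which is where your proposal genuinely falls short of a proof.
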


\begin{proof}
In view of Remark~\ref{ext-par}, we may assume that $K$ has
separated analytic $\mathcal{A}(K)$-structure. We apply
Theorem~\ref{term} and proceed with induction with respect to the
complexity of the term $t$. Since an angular component map exists,
the sorts $\widetilde{K}$ and $\Gamma$ are orthogonal in
$$ RV(K) \simeq \widetilde{K} \times \Gamma_{K}. $$
Therefore, after shrinking $F$, we can assume that
$\overline{ac}\: (F) = \{1 \}$ and the function $g$ goes into $ \{
\xi \} \times \Gamma^{s} $ with a $\xi \in \widetilde{K}^{s}$, and
next that $\xi = (1,\ldots,1)$; similar reductions were considered
in our papers~\cite{Now-Sel,Now-thm}. For simplicity, we look at
$g$ as a function into $\Gamma^{s}$. We shall briefly explain the
most difficult case where
$$ t(x,g(x)) = h_{m}(a_{0}(x),\ldots,a_{m}(x),g_{0}(x)), $$
assuming that the theorem holds for the terms
$a_{0},\ldots,a_{m}$; here $g_{0}$ is one of the components of
$g$. By Lemma~\ref{line-2}, we can assume that
\begin{equation}
p v(x) + q g_{0}(x) + v(a) = 0
\end{equation}
for some $p,q \in \mathbb{Z}$, $a \in K \setminus \{ 0 \}$. By the
induction hypothesis, we get
$$ \{ (x,a_{i}(x)): x \in F \} \subset \{ (x^{r}, \alpha_{i}(x)): x \in
   G \}, \ \ \ i = 0,1,\ldots,m. $$
Put
$$ P(x,T) := \sum_{i=0}^{m} \, a_{i}(x) T^{i}. $$
By the very definition of $h_{m}$ and since we are interested in
the vicinity of zero, we may assume that there is
$i_{0}=0,\ldots,m$ such that
$$ \forall \: x \in F \ \exists \: u \in K \ \
   v(u) = g_{0}(x), \ \ \overline{ac} \: u =1, $$
\begin{equation}
  v(a_{i_{0}}(x)u^{i_{0}}) = \min \, \{ v(a_{i}(x)u^{i}), \
i=1,\ldots,m \},
\end{equation}
$$ v(P(x,u)) > v(a_{i_{0}}(x)u^{i_{0}}), \ \ v \left( \frac{\partial
   \, P}{\partial \, T} (x,u) \right) = v(a_{i_{0}}(x)u^{i_{0}}). $$
Then $h_{m}(a_{0}(x),\ldots,a_{m}(x),g_{0}(x))$ is a unique $b(x)
\in K$ such that
$$ P(x,b(x))=0, \ \ v(b(x)) = g_{0}(x), \ \ \overline{ac} \: b(x)
   =1. $$
By \cite[Remarks~7.2, 7.3]{Now-thm}, the set $F$ contains the set
of points of the form $c^{r}t^{Nqr}$ for some $c \in K$ with
$\overline{ac} \: c=1$, a positive integer $N$ and all $t \in
K^{\circ}$ with $\overline{ac} \: t =1$. Hence and by
equation~(3.1), we get
$$ g_{0}(c^{r}t^{Nqr}) = g_{0}(c^{r}) - v(t^{Npr}). $$
Take $d \in K$ such that $g_{0}(c^{r}) =v(d)$ and $\overline{ac}
\: d=1$. Then
$$ g_{0}(c^{r}t^{Nqr}) = v(dt^{-Npr}). $$
Thus the homothetic change of variable
$$ Z = T/dt^{-Npr} = t^{Npr}T/d $$
transforms the polynomial
$$ P(c^{r}t^{Nqr},T) = \sum_{i=0}^{m} \, \alpha_{i}(ct^{Nq}) T^{i} $$
into a polynomial $Q(t,Z)$ to which Hensel's lemma applies
(cf.~\cite[Lemma~3.5]{Pa1}):

\begin{equation}
  P(c^{r}t^{Nqr},T) = P(c^{r}t^{Nqr},dt^{-Npr}Z) =
  \end{equation}
$$ \alpha_{i_{0}}(ct^{Nq}) \cdot (dt^{-Npr})^{i_{0}} \cdot Q(t,Z).
$$
Indeed, the formulas (3.2) imply that the coefficients of the
polynomial $Q$ are power series (of order $\geq 0$) in the
variable $t$, and that
$$ v(Q(0,1)) > 0 \ \ \ \text{and} \ \ \ v\left( \frac{\partial \,
   Q}{\partial \, Z} (0,1) \right) = 0. $$
Therefore the conclusion of the theorem follows.
\end{proof}

We still need the concept of fiber shrinking introduced in our
paper~\cite{Now-Sel}. Let $A$ be an $\mathcal{L}$-definable subset
of $K^{n}$ with accumulation point
$$ a = (a_{1},\ldots,a_{n}) \in K^{n} $$
and $E$ an $\mathcal{L}$-definable subset of $K$ with accumulation
point $a_{1}$. We call an $\mathcal{L}$-definable family of sets
$$ \Phi = \bigcup_{t \in E} \ \{ t \} \times \Phi_{t} \subset A $$
an $\mathcal{L}$-definable $x_{1}$-fiber shrinking for the set $A$
at $a$ if
$$ \lim_{t \rightarrow a_{1}} \, \Phi_{t} = (a_{2},\ldots,a_{n}),
$$
i.e.\ for any neighbourhood $U$ of $(a_{2},\ldots,a_{n}) \in
K^{n-1}$, there is a neighbourhood $V$ of $a_{1} \in K$ such that
$\emptyset \neq \Phi_{t} \subset U$ for every $t \in V \cap E$, $t
\neq a_{1}$. When $n=1$, $A$ is itself a fiber shrinking for the
subset $A$ of $K$ at an accumulation point $a \in K$.


\begin{proposition}\label{FS} (Fiber shrinking)
Every $\mathcal{L}$-definable subset $A$ of $K^{n}$ with
accumulation point $a \in K^{n}$ has, after a permutation of the
coordinates, an $\mathcal{L}$-definable $x_{1}$-fiber shrinking at
$a$.
\end{proposition}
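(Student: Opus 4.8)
The plan is to reduce the statement, via elimination of valued field quantifiers, to the purely value-group statement of Lemma~\ref{line-1}; this is the same strategy as in the algebraic case. First I would translate coordinates so that the accumulation point is $a=0$. Partitioning $A$ into the finitely many pieces
$$ A_{S} := \{ x \in A : x_{i} \neq 0 \ \Longleftrightarrow \ i \in S \}, \qquad S \subset \{1,\ldots,n\}, $$
and using that $0$ is an accumulation point of $A$, I may replace $A$ by one piece $A_{S}$ of which $0$ is still an accumulation point; necessarily $S \neq \emptyset$. On $A_{S}$ the coordinates $x_{i}$ with $i \notin S$ vanish identically (so already equal $a_{i}=0$), while $x_{i} \neq 0$ for $i \in S$. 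Hence
$$ P := \{ (v(x_{i}))_{i \in S} : x \in A_{S} \} \subset \Gamma^{|S|} $$
is a well-defined $\mathcal{L}$-definable subset of the value-group sort, and by orthogonality of $\widetilde{K}$ and $\Gamma$ together with relative quantifier elimination for ordered abelian groups it is definable in the ordered group language. The accumulation-point hypothesis translates precisely into the statement that $(\infty,\ldots,\infty)$ is an accumulation point of $P$ in the sense of Lemma~\ref{line-1}.

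Next I would apply Lemma~\ref{line-1} to obtain an affine semi-line
$$ L = \{ (r_{i}\tau + \gamma_{i})_{i \in S} : \tau \in \Gamma,\ \tau \geq 0 \}, \qquad r_{i} \in \mathbb{N}, $$
passing through a point $\gamma \in P$ along which $P$ still accumulates at $(\infty,\ldots,\infty)$. The crucial observation is that every slope is strictly positive: if $r_{i}=0$ for some $i$, the $i$-th coordinate along $L$ would be constantly $\gamma_{i}$, contradicting accumulation at $\infty$ in that coordinate, so $r_{i} \geq 1$ for all $i \in S$. I would then pull $L$ back to the valued field and put
$$ \Phi := \{ x \in A_{S} : (v(x_{i}))_{i \in S} \in L \} \subset A, $$
which is $\mathcal{L}$-definable since $v$ is $\mathcal{L}$-definable and $L$ is definable in $\Gamma$. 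As the valuation vectors realized on $\Phi$ are exactly $P \cap L$, the set $\Phi$ again has $0$ as an accumulation point.

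Finally I would fix any index $i_{0} \in S$ and permute the coordinates so that $i_{0}$ becomes the first one. For $x \in \Phi$ the membership $(v(x_{i}))_{i \in S} \in L$ means that $v(x_{i}) = r_{i}\tau + \gamma_{i}$ for a common $\tau \geq 0$; in particular $v(x_{1}) = r_{i_{0}}\tau + \gamma_{i_{0}}$. Setting $E := \{ x_{1} : x \in \Phi \}$ and $\Phi_{t} := \{ (x_{2},\ldots,x_{n}) : (t,x_{2},\ldots,x_{n}) \in \Phi \}$, the accumulation of $\Phi$ at $0$ gives that $a_{1}=0$ is an accumulation point of $E$ and that $\Phi_{t} \neq \emptyset$ for $t \in E$. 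Moreover, as $v(x_{1}) \to \infty$ we get $\tau \to \infty$ (because $r_{i_{0}}>0$), whence $v(x_{i}) = r_{i}\tau + \gamma_{i} \to \infty$ for every $i \in S$ (because $r_{i}>0$), while $x_{i}=0$ for $i \notin S$. Therefore for any neighbourhood $U$ of $(a_{2},\ldots,a_{n})=0$ there is a neighbourhood $V$ of $a_{1}$ with $\emptyset \neq \Phi_{t} \subset U$ for all $t \in V \cap E$, $t \neq a_{1}$, which is exactly the required $x_{1}$-fiber shrinking.

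I expect the difficulty to be organizational rather than conceptual: the heart of the argument is entirely carried by Lemma~\ref{line-1}, and the passage from the field to the value group is legitimate thanks to elimination of valued field quantifiers (Theorem~\ref{ball}) and orthogonality. The one point genuinely requiring care is the verification that all slopes $r_{i}$ are positive, since it is this that converts the mere linear coupling of valuations along $L$ into honest topological shrinking; the reduction to a single piece $A_{S}$ and the choice of $i_{0}$ are routine.
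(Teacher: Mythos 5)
Your proposal is correct and follows exactly the route the paper itself takes: the paper's entire proof is the one-line reduction, via elimination of valued field quantifiers and orthogonality, to Lemma~\ref{line-1}, as in the algebraic case of \cite{Now-thm}. You have simply filled in the details of that reduction --- including the genuinely necessary observation that all slopes $r_{i}$ along the semi-line must be strictly positive --- and the argument is sound.
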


By means of elimination of valued field quantifiers
(Theorem~\ref{ball}), this proposition reduces easily to
Lemma~\ref{line-1} (similarly as it was in the algebraic case
treated in~\cite{Now-thm}). Now we can readily proceed with the

\vspace{1ex}

{\em Proof of the closedness theorem} (Theorem~\ref{clo-th}). We
must show that if $B$ is an $\mathcal{L}$-definable subset of $D
\times (K^{\circ})^{n}$ and a point $a$ lies in the closure of $A
:= \pi(B)$, then there is a point $b$ in the closure of $B$ such
that $\pi(b)=a$. As before (cf.~\cite[Section~~8]{Now-thm}), the
theorem reduces easily to the case $m=1$ and next, by means of
fiber shrinking (Proposition~\ref{FS}), to the case $n=1$. We may
obviously assume that $a = 0 \not \in A$.

\vspace{1ex}

By b-minimal cell decomposition, we can assume that the set $B$ is
a relative cell with center over $A$. It means that has a
presentation of the form
$$ \Lambda: B \ni (x,y) \to (x,\lambda(x,y)) \in  A \times RV(K)^{s}, $$
where $\lambda: B \to RV(K)^{s}$ is an $\mathcal{L}$-definable
function, such that for each $(x,\xi) \in \Lambda (B)$ the
pre-image $\lambda_{x}^{-1}(\xi) \subset K$ is either a point or
an open ball; here $\lambda_{x}(y) := \lambda(x,y)$. In the latter
case, there is a center, i.e.\ an $\mathcal{L}$-definable map
$\zeta: \Lambda(B) \to K$, and a (unique) map $\rho: \Lambda (B)
\to RV(K) \setminus \{ 0 \}$ such that
$$ \lambda_{x}^{-1}(\xi) = \{ y \in K: rv\, (y - \zeta (x,\xi)) =
   \rho (x,\xi) \} . $$
Again, since the sorts $\widetilde{K}$ and $\Gamma$ are orthogonal
in $RV(K) \simeq \widetilde{K} \times \Gamma_{K}$, we can assume,
after shrinking the sets $A$ and $B$, that
$$ \lambda(B) \subset \{ (1,\ldots,1) \} \times \Gamma^{s} \subset
   \widetilde{K}^{s} \times \Gamma_{K}^{s}; $$
let $\tilde{\lambda}(x,y)$ be the projection of $\lambda(x,y)$
onto $\Gamma^{s}$. By Lemma~\ref{line-2}, we can assume once
again, after shrinking the sets $A$ and $B$, that the set
$$ \{ (v(x),v(y),\tilde{\lambda}(x,y)): \; (x,y) \in B \} \subset
   \Gamma^{s+2} $$
is contained in an affine semi-line with integer coefficients.
Hence $\lambda(x,y) = \phi(v(x)$ is a function of one variable
$x$. We have two cases.

\vspace{1ex}

{\em Case I.} $\lambda_{x}^{-1}(\xi) \subset K^{\circ}$ is a
point. Since each $\lambda_{x}$ is a constant function, $B$ is the
graph of an $\mathcal{L}$-definable function. The conclusion of
the theorem follows thus from Proposition~\ref{limit-th1}.

\vspace{1ex}

{\em Case II.} $\lambda_{x}^{-1}(\xi) \subset K^{\circ}$ is a
ball. Again, application of Lemma~\ref{line-2} makes it possible,
after shrinking the sets $A$ and $B$, to arrange the center
$$ \zeta: \Lambda(B) \ni (x,k) \to \zeta(x, v(x)) = \zeta(x) \in K $$
and the function $\rho(x,k) = \rho(v(x))$ as functions of one
variable $x$. Likewise as it was above, we can assume that the set
$$ P := \{ (v(x), \rho(v(x))) : x \in A \} \subset \Gamma^{2} $$
is contained in an affine line $p v(x) +q \rho(v(x)) + v(c) =0$
with integer coefficients $p,q$, $q \neq 0$; furthermore, that $P$
contains the set
$$ Q := \{ (v(ct^{qN}), \rho(v(ct^{qN}))): \; t \in K^{\circ} \}
$$
for a positive integer $N$. Then we easily get
$$ \rho(v(ct^{qN})) = \rho(c) - pN v(t) = v(ct^{-pN}). $$
Hence the set $B$ contains the graph
$$ \{ (ct^{qN}, \zeta(ct^{qN}) + ct^{-pN}) : \; t \in K^{\circ} \}. $$
As before, the conclusion of the theorem follows thus from
Proposition~\ref{limit-th1}, and the proof is complete.

\section{Applications}

The framework of b-minimal structures provides cell decomposition
and a good concept of dimension (cf.~\cite{C-L}), which in
particular satisfies the axioms from the paper~\cite{Dries-dim}.
For separated analytic structures, the zero-dimensional sets are
precisely the finite sets, and also valid is the following
dimension inequality, which is of great geometric significance:
\begin{equation}\label{ineq}
\dim \, \partial E < \dim E;
\end{equation}
here $E$ is any $\mathcal{L}$-definable subset of $K^{n}$ and
$\partial E := \overline{E} \setminus E$ denotes the frontier of
$A$.

\vspace{1ex}

We first apply the closedness theorem to obtain a version of the
theorem on existence of the limit.

\begin{proposition}\label{limit-th2}
Let $f: E \to \mathbb{P}^{1}(K)$ be an $\mathcal{L}$-definable
function on a subset $E$ of $K$, and suppose that $0$ is an
accumulation point of $E$. Then there is a finite partition of $E$
into $\mathcal{L}$-definable sets $E_{1},\ldots,E_{r}$ and points
$w_{1}\ldots,w_{r} \in \mathbb{P}^{1}(K)$ such that
$$ \lim_{x \rightarrow 0}\, f|E_{j}\, (x) = w_{j} \ \ \ \text{for} \ \
   j=1,\ldots,r. $$
\end{proposition}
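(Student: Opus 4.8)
The plan is to deduce Proposition~\ref{limit-th2} from the closedness theorem (Theorem~\ref{clo-th}) together with the single-branch limit result (Proposition~\ref{limit-th1}) and the dimension inequality~\eqref{ineq}, proceeding by a definable induction on the complexity of $E$ rather than by analyzing the term structure of $f$ directly. First I would pass, via Remark~\ref{ext-par}, to the situation where $K$ carries separated analytic $\mathcal{A}(K)$-structure and has an angular component map, so that $\widetilde{K}$ and $\Gamma_K$ are orthogonal in $RV(K)$. The crucial reduction is to realize the graph of $f$ as a definable subset of $E \times \mathbb{P}^1(K)$ and to exploit that $\mathbb{P}^1(K)$ is the image of the compact-like set $(K^\circ)^2 \setminus \{0\}$ under the canonical quotient; this is exactly the setting in which Corollary~\ref{clo-th-cor-0} (closedness of $\pi : E \times \mathbb{P}^1(K) \to E$) becomes the main lever.

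The key steps, in order, would be as follows. I consider the graph $\Gamma_f := \{ (x, f(x)) : x \in E \} \subset E \times \mathbb{P}^1(K)$ and its closure $\overline{\Gamma_f}$ inside $K \times \mathbb{P}^1(K)$. Since $0$ is an accumulation point of $E$, the fiber $(\overline{\Gamma_f})_0 := \{ w \in \mathbb{P}^1(K) : (0,w) \in \overline{\Gamma_f} \}$ is the set of all limiting values of $f$ as $x \to 0$ along definable subsets; by Proposition~\ref{limit-th1} applied to suitable definable pieces, this fiber is nonempty. The heart of the argument is to show that this limit-fiber is \emph{finite}. For this I would invoke the closedness theorem: the projection $\pi : \overline{\Gamma_f} \to \overline{E}$ is definably closed with projective fiber, and the fiber over $0$ is a definable subset of $\mathbb{P}^1(K)$ of dimension zero, hence finite. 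Writing $(\overline{\Gamma_f})_0 = \{ w_1, \ldots, w_r \}$, I then separate these finitely many candidate limits using disjoint definable neighborhoods $U_1, \ldots, U_r$ in $\mathbb{P}^1(K)$ and set $E_j := \{ x \in E : f(x) \in U_j \}$, after discarding from $E$ a definable set on which $f$ avoids all the $U_j$; that discarded set must have $0$ as a non-accumulation point, since otherwise Proposition~\ref{limit-th1} would produce a further limiting value outside $\{w_1,\ldots,w_r\}$, contradicting closedness. Finally, on each $E_j$ one checks $\lim_{x\to 0} f|E_j = w_j$ directly from the neighborhood separation together with the frontier dimension bound~\eqref{ineq}.

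The main obstacle I anticipate is proving \emph{finiteness} of the limit-fiber cleanly, i.e.\ that only finitely many values $w \in \mathbb{P}^1(K)$ can arise as $\lim_{x\to 0} f|F$ over definable $F \subset E$ with accumulation point $0$. Proposition~\ref{limit-th1} guarantees that along any single definable branch a limit exists, but a priori different branches could yield a continuum of limits; ruling this out is precisely where the closedness theorem must be used, because it forces the zero-fiber of the closed graph to be a genuine definable subset of $\mathbb{P}^1(K)$, and then the good dimension theory for b-minimal structures (finiteness of zero-dimensional definable sets) converts ``no interior, bounded'' into ``finite.'' A secondary technical point is ensuring the partition exhausts $E$ up to a set not accumulating at $0$; here I would argue by contradiction, applying Proposition~\ref{limit-th1} to the putative leftover set to manufacture a limiting value not among the $w_j$, which is impossible once finiteness of the fiber is established. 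Once these two points are in place, the construction of the $E_j$ and the verification of the individual limits are routine.
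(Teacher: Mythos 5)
Your proposal follows essentially the same route as the paper: form the graph $F \subset K \times \mathbb{P}^{1}(K)$, show the fiber of its closure over $0$ is finite, separate the finitely many limit values by disjoint neighborhoods $U_{j}$, and use the closedness theorem both to discard the leftover set $f^{-1}(\mathbb{P}^{1}(K)\setminus\bigcup U_{j})$ and to verify each individual limit. The one point to tighten is the finiteness of the limit-fiber: it comes not from the closedness theorem (which only gives non-emptiness of $\partial F$) but from the frontier inequality $\dim \partial F < \dim F = 1$ together with the fact that, after arranging $0 \notin E$, the fiber of $\overline{F}$ over $0$ lies in $\partial F$, hence is zero-dimensional and therefore finite.
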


\begin{proof}

We may of course assume that $0 \not \in E$. Put
$$ F := \mathrm{graph}\, (f) = \{ (x, f(x): x \in E \} \subset K
   \times \mathbb{P}^{1}(K); $$
obviously, $F$ is of dimension $1$. It follows from the closedness
theorem that the frontier $\partial F \subset K \times
\mathbb{P}^{1}(K)$ is non-empty, and thus of dimension zero by
inequality~\ref{ineq}. Say
$$ \partial F \cap (\{ 0 \} \times \mathbb{P}^{1}(K)) = \{
   (0,w_{1}), \ldots, (0,w_{r}) \} $$
for some $w_{1},\ldots,w_{r} \in \mathbb{P}^{1}(K).$ Take pairwise
disjoint neighborhoods $U_{i}$ of the points $w_{i}$,
$i=1,\ldots,r$, and set
$$ F_{0} := F \cap \left(E \times \left( \mathbb{P}^{1}(K) \setminus \bigcup_{i}^{r}
   E_{i} \right) \right). $$
Let
$$ \pi: K \times \mathbb{P}^{1}(K) \longrightarrow K $$
be the canonical projection. Then
$$ E_{0} := \pi (F_{0}) = f^{-1}\left( \mathbb{P}^{1}(K) \setminus \bigcup_{i}^{r}
   E_{i} \right). $$
Clearly, the closure $\overline{F}_{0}$ of $F_{0}$ in $K \times
\mathbb{P}^{1}(K))$ and $\{ 0 \} \times \mathbb{P}^{1}(K))$ are
disjoint. Hence and by the closedness theorem, $0 \not \in
\overline{E_{0}}$, the closure of $E_{0}$ in $K$. The set $E_{0}$
is thus irrelevant with respect to the limes at $0 \in K$.
Therefore it remains to show that
$$ \lim_{x \rightarrow 0}\, f|E_{j}\, (x) = w_{j} \ \ \ \text{for} \ \
   j=1,\ldots,r. $$
Otherwise there is a neighborhood $V_{i} \subset U_{i}$ such that
$0$ would be an accumulation point of the set
$$ f^{-1}(U_{i} \setminus V_{i}) = \pi (F \cap (E
   \times (U_{i} \setminus V_{i}))). $$
Again, it follows from the closedness theorem that $\{ 0 \} \times
\mathbb{P}^{1}(K)$ and the closure of $F \cap (E \times (U_{i}
\setminus V_{i}))$ in $K \times \mathbb{P}^{1}(K))$ would not be
disjoint. This contradiction finishes the proof.
\end{proof}

\begin{remark}
Let us mention that Proposition~\ref{limit-th2} can be
strengthened as stated below (cf.\ the algebraic versions
\cite[Proposition~5.2]{Now-Sel} and \cite[Theorem~5.1]{Now-thm}):

\vspace{1ex}

\begin{em}
Moreover, perhaps after refining the finite partition of $E$,
there is a neighbourhood $U$ of $0$ such that each definable set
$$ \{ (v(x), v(f(x))): \; x \in (E_{j} \cap U) \setminus \{0 \} \}
   \subset \Gamma \times (\Gamma \cup \ \{
   \infty \}),  \ j=1,\ldots,r, $$
is contained in an affine line with rational slope
$$ l = \frac{p_{j}}{q} \cdot k + \beta_{j}, \ j=1,\ldots,r, $$
with $p_{j},q \in \mathbb{Z}$, $q>0$, $\beta_{j} \in \Gamma$, or
in\/ $\Gamma \times \{ \infty \}$.
\end{em}
\end{remark}

Now we turn to a second application, namely the following theorem
on piecewise continuity.

\begin{theorem}\label{piece}
Let $A \subset K^{n}$ and $f: A \to \mathbb{P}^{1}(K)$ be an
$\mathcal{L}$-definable function. Then $f$ is piecewise
continuous, i.e.\ there is a finite partition of $A$ into
$\mathcal{L}$-definable locally closed subsets
$A_{1},\ldots,A_{s}$ of $K^{n}$ such that the restriction of $f$
to each $A_{i}$ is continuous.
\end{theorem}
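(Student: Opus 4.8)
The plan is to argue by induction on $d = \dim A$, the base case $d = 0$ being immediate since then $A$ is finite, so every point is isolated and constitutes a locally closed piece on which $f$ is trivially continuous. For the inductive step I would pass to the graph
$$ F = \{ (x, f(x)) : x \in A \} \subset K^{n} \times \mathbb{P}^{1}(K), $$
which is definable and, being in definable bijection with $A$, satisfies $\dim F = d$. I would then study its closure $\overline{F}$ in $K^{n} \times \mathbb{P}^{1}(K)$ together with its frontier $\partial F = \overline{F} \setminus F$. By Corollary~\ref{clo-th-cor-0} the canonical projection $\pi: K^{n} \times \mathbb{P}^{1}(K) \to K^{n}$ is definably closed, and this is precisely the instrument that turns a continuity statement into a statement about the frontier $\partial F$.

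The key step is to show that, for $a \in A$, the function $f$ is continuous at $a$ (relative to $A$) if and only if the fiber $(\partial F)_{a}$ is empty, equivalently $\overline{F} \cap (\{ a \} \times \mathbb{P}^{1}(K)) = \{ (a, f(a)) \}$. One implication is mere continuity of the closure operation: if some $w \neq f(a)$ satisfies $(a,w) \in \overline{F}$, then points of $A$ approach $a$ with $f$-values approaching $w$, so $f$ is discontinuous at $a$. The converse is where the closedness theorem is used: if $f$ were discontinuous at $a$, there would be a neighbourhood $W$ of $f(a)$ in $\mathbb{P}^{1}(K)$ with $a$ an accumulation point of $A' := \{ x \in A : f(x) \notin W \}$, and applying definable closedness of $\pi$ to the closed set $\overline{F \cap (A' \times (\mathbb{P}^{1}(K) \setminus W))}$ would produce a point $(a,w) \in \overline{F}$ with $w \in \mathbb{P}^{1}(K) \setminus W$, hence $w \neq f(a)$ and $(a,w) \in \partial F$. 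Consequently, writing $C$ for the set of continuity points of $f|_{A}$, the discontinuity locus is
$$ D := A \setminus C = A \cap \pi(\partial F). $$

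Having identified $D$ with a subset of $\pi(\partial F)$, I would invoke the frontier inequality~\ref{ineq} to get $\dim D \leq \dim \pi(\partial F) \leq \dim \partial F < \dim F = d$. The induction hypothesis then applies to $f|_{D}$, partitioning $D$ into finitely many locally closed definable sets on which $f$ is continuous. It remains to treat $C$, on which $f$ is continuous at every point. Here I would use the standard consequence of the frontier inequality that every definable set $S$ is a finite disjoint union of locally closed definable sets: the set $S \setminus \overline{\partial S}$ equals $\overline{S} \setminus \overline{\partial S}$, which is open in $\overline{S}$ and hence locally closed, while the remainder $S \cap \overline{\partial S}$ has strictly smaller dimension and is handled by recursion. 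Decomposing $C$ this way and noting that pointwise continuity is inherited under restriction to any subset, the restriction of $f$ to each piece of $C$ is continuous; combining these with the pieces coming from $D$ yields the required partition.

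I expect the main obstacle to be the equivalence of the key step, specifically the forward direction that discontinuity forces a nonempty frontier fiber. This is exactly the point at which the projective fiber $\mathbb{P}^{1}(K)$ and the closedness theorem are indispensable, playing the role that compactness of the target would play over a locally compact ground field; the remaining dimension drop and the induction are then routine.
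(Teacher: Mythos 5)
Your proposal is correct and takes essentially the same route as the paper: induction on $\dim A$, passage to the graph in $K^{n}\times\mathbb{P}^{1}(K)$, the frontier inequality $\dim\partial F<\dim F$, and the closedness theorem applied to the projection, with your good set $C=A\setminus\pi(\partial F)$ coinciding with the paper's $B=\pi(\overline{E})\setminus\pi(\partial E)$. The only organizational differences are that the paper first normalizes the graph to be locally closed and then gets continuity on $B$ by noting that the projection from the graph over $B$ is a definably closed bijection (hence a homeomorphism), whereas you verify continuity pointwise via the frontier-fiber criterion and defer the locally-closed decomposition to the end; both verifications rest on the same application of the closedness theorem.
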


\begin{proof}
Consider an $\mathcal{L}$-definable function $f: A \to
\mathbb{P}^{1}(K)$ and its graph
$$ E := \{ (x,f(x)): x \in A \} \subset K^{n} \times \mathbb{P}^{1}(K). $$
We shall proceed with induction with respect to the dimension
$$ d = \dim A = \dim \, E $$
of the source and graph of $f$.

\vspace{1ex}

Observe first that every $\mathcal{L}$-definable subset $E$ of
$K^{n}$ is a finite disjoint union of locally closed
$\mathcal{L}$-definable subsets of $K^{n}$. This can be easily
proven by induction on the dimension of $E$ by means of
inequality~\ref{ineq}. Therefore we can assume that the graph $E$
is a locally closed subset of $K^{n} \times \mathbb{P}^{1}(K)$ of
dimension $d$ and that the conclusion of the theorem holds for
functions with source and graph of dimension $< d$.

\vspace{1ex}

Let $F$ be the closure of $E$ in $K^{n} \times \mathbb{P}^{1}(K)$
and $\partial E := F \setminus E$ be the frontier of $E$. Since
$E$ is locally closed, the frontier $\partial E$ is a closed
subset of $K^{n} \times \mathbb{P}^{1}(K)$ as well. Let
$$ \pi: K^{n} \times \mathbb{P}^{1}(K) \longrightarrow K^{n} $$
be the canonical projection. Then, by virtue of the closedness
theorem, the images $\pi(F)$ and $\pi(\partial E)$ are closed
subsets of $K^{n}$. Further,
$$ \dim \, F = \dim \, \pi(F) = d $$
and
$$ \dim \, \pi(\partial E) \leq \dim \, \partial E < d; $$
the last inequality holds by inequality~\ref{ineq}. Putting
$$ B := \pi(F) \setminus \pi(\partial E) \subset \pi(E) = A, $$
we thus get
$$ \dim \, B = d \ \ \text{and} \ \ \dim \, (A \setminus B) < d.
$$
Clearly, the set
$$ E_{0} := E \cap (B \times \mathbb{P}^{1}(K)) = F \cap (B \times
   \mathbb{P}^{1}(K)) $$
is a closed subset of $B \times \mathbb{P}^{1}(K)$ and is the
graph of the restriction
$$ f_{0}: B \longrightarrow \mathbb{P}^{1}(K) $$
of $f$ to $B$. Again, it follows immediately from the closedness
theorem that the restriction
$$ \pi_{0} : E_{0} \longrightarrow B $$
of the projection $\pi$ to $E_{0}$ is a definably closed map.
Therefore $f_{0}$ is a continuous function. But, by the induction
hypothesis, the restriction of $f$ to $A \setminus B$ satisfies
the conclusion of the theorem, whence so does the function $f$.
This completes the proof.
\end{proof}

We immediately obtain

\begin{corollary}
The conclusion of the above theorem holds for any
$\mathcal{L}$-definable function $f: A \to K$.
\end{corollary}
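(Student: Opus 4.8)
The plan is to deduce this from Theorem~\ref{piece} by regarding $K$ as an open subset of $\mathbb{P}^{1}(K)$ through the canonical affine chart. First I would fix the standard embedding
$$ \iota: K \longrightarrow \mathbb{P}^{1}(K), \ \ \ x \longmapsto [x:1], $$
whose image is the complement of the point $[1:0]$ at infinity. This map is $\mathcal{L}$-definable (indeed semialgebraic), and it is a homeomorphism of $K$, equipped with its $K$-topology, onto the open subset $\mathbb{P}^{1}(K) \setminus \{ [1:0] \}$, since on this affine chart the $K$-topology coincides with the subspace topology inherited from $\mathbb{P}^{1}(K)$.

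Next I would pass to the composite $\tilde{f} := \iota \circ f : A \to \mathbb{P}^{1}(K)$, which is again $\mathcal{L}$-definable, being a composition of $\mathcal{L}$-definable maps. Applying Theorem~\ref{piece} to $\tilde{f}$ yields a finite partition of $A$ into $\mathcal{L}$-definable locally closed subsets $A_{1},\ldots,A_{s}$ of $K^{n}$ such that each restriction $\tilde{f}|_{A_{i}}$ is continuous. This already furnishes the required partition; it remains only to transfer continuity back from $\tilde{f}$ to $f$.

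For that last step I would use that $\iota$ is a homeomorphism onto its open image and that $\tilde{f}$ takes values in $\iota(K)$. Hence, for each index $i$, the restriction
$$ f|_{A_{i}} = \iota^{-1} \circ \big( \tilde{f}|_{A_{i}} \big) $$
is continuous, the inverse $\iota^{-1}$ being continuous on the open chart $\iota(K)$. Thus $f$ is piecewise continuous in the sense of the theorem, with the same locally closed pieces $A_{1},\ldots,A_{s}$.

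The only point requiring any care, and hardly a genuine obstacle, is the verification that the affine chart $\iota$ is an $\mathcal{L}$-definable homeomorphism onto an open subset of $\mathbb{P}^{1}(K)$, so that continuity with respect to the $K$-topology on the target $K$ and continuity with respect to the topology of $\mathbb{P}^{1}(K)$ agree for maps landing in $\iota(K)$. Once this identification is recorded, the corollary follows at once from Theorem~\ref{piece}.
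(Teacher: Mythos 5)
Your argument is correct and is exactly the intended one: the paper gives no proof, introducing the corollary with ``We immediately obtain,'' and the immediate deduction is precisely the identification of $K$ with the affine chart $\mathbb{P}^{1}(K)\setminus\{[1:0]\}$ that you spell out. Nothing further is needed.
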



Let us conclude with the following comment. We are currently
preparing subsequent articles, which will provide several
applications of the closedness theorem, possibly over
non-algebraically closed ground fields, including i.al.\ the
analytic, non-Archimedean versions of the \L{}ojasiewicz
inequalities and of curve selection. The algebraic versions of
these results were established in our
papers~\cite{Now-Sel,Now-thm}.

\vspace{7ex}

\vspace{3ex}

\begin{small}
Institute of Mathematics

Faculty of Mathematics and Computer Science

Jagiellonian University


ul.~Profesora S.\ \L{}ojasiewicza 6

30-348 Krak\'{o}w, Poland

{\em E-mail address: nowak@im.uj.edu.pl}
\end{small}

\end{document}